\newtheorem{theorem}{Theorem}[section]
\newtheorem{corollary}[theorem]{Corollary}
\newtheorem{lemma}[theorem]{Lemma}
\newtheorem{definition}[theorem]{Definition}
\newtheorem{remark}[theorem]{Remark}
\numberwithin{equation}{section}
\begin{document}

\title{$q$-Moment Estimates for the Singular $p$-Laplace Equation and Applications}

\date{\today}

\thanks{National Science Foundation of China, Grants Numbers: 11971310 and 11671257 are gratefully acknowledged.}

\author{Samuel Drapeau}
\address{Shanghai Jiao Tong University, China Academy of Financial Research (SAIF) and School of Mathematical Sciences, Shanghai, China}
\email{sdrapeau@saif.sjtu.edu.cn}
\urladdr{http://www.samuel-drapeau.info}

\author{Liming Yin}
\address{Shanghai Jiao Tong University, School of Mathematical Sciences, Shanghai, China}
\email{gacktkaga@sjtu.edu.cn}

\begin{abstract}
    We provide $q$-moment estimates on annuli for weak solutions of the singular $p$-Laplace equation where $p$ and $q$ are conjugates.
    We derive $q$-uniform integrability for some critical parameter range.
    As a application, we derive a mass conservation as well as a weak convergence result for a larger critical parameter range.
    Concerning the latter point, we further provide a rate of convergence of order $t^{q-1}$ of the solution in the $q$-Wasserstein distance.
    \newline
    {Keywords:} Singular $p$-Laplace Equation; Moment Estimates; Mass Conservation; Convergence Rate in Wasserstein Distance.
\end{abstract}

\maketitle
\section{Introduction}
We consider the Cauchy problem for the parabolic $p$-Laplace equation posed in the whole Euclidean space $\mathbb{R}^N$
\begin{equation}
    \label{eq:p_laplace}
    \begin{cases}
        \displaystyle \partial_t u = \mathrm{div}(|\nabla u|^{p-2} \nabla u),\\
        u(0)=\mu,
    \end{cases}
\end{equation}
with a positive Radon measure $\mu$ as initial data and $p_c<p<2$ where the critical value $p_c$ is defined as 
\begin{equation*}
    p_c := \frac{2N}{N+1}.
\end{equation*}
Solutions are understood in the weak sense on the whole space and finite time horizon $S_T:=\mathbb{R}^N \times (0,T]$ where $N\geq 2$.

For $1<p<2$, such Cauchy problem is know as the fast diffusion or singular equation.
Existence and uniqueness and regularity of weak solutions with various conditions for initial data are studied by \citet{dibenedetto1990, zhao1995, chen1988}.
For further insights about parabolic $p$-Laplace equation, we refer to \citep{Benedetto1993, Vazquez2006}.
While for a Dirac measure $\delta_0$ as initial data, this Cauchy problem does not admit a positive solution for any $1<p\leq p_c$, see \citep{zhao1995}, it is known that for $p_c<p<2$, there exists a fundamental solution called Barenblatt source-type solution, see \citep{Vazquez2006}.

In the context of optimal matching problem, \citet{ambrosio2019pde} draw a link between $2$-Wasserstein distance and Poisson equation through linearization of Monge-Amp\`ere equation and regularize empirical measures with heat semigroup.
They suggest that such a linearization for the $q$-Wasserstein distance leads to $p$-Laplacian where $p$ and $q$ are conjugates.
Such an approach has been used in a different context by \citet{evans1999differential}.

Based on these ideas, it seems natural to regularize empirical measures with $p$-Laplacian semigroup.
And further, to connect the Cauchy problem \eqref{eq:p_laplace} to flows in the $q$-Wasserstein space, as done in \citet{kell2016q}.
A key point in this connection concerns the behavior of $q$-moment of weak solutions $u(x,t)$ of Cauchy problem \eqref{eq:p_laplace}, namely,
\begin{equation*}
    \int_{\mathbb{R}^N} |x|^q u(x,t)dx.
\end{equation*}
For the degenerate case where $p>2$, that is, $1<q<2$, such $q$-moments are easily estimated, at least for initial data with finite mass and compact support, due to the finite propagation property, see \citep{kamin1988}.
However, for the singular case $p_c<p<2$, solutions diffuse everywhere even with a Dirac measure as initial data.
Hence, the focus on the parameter range $p_c<p<2$.

Following similar methods as in \citep{dibenedetto1990, zhao1995}, our main result are the following $q$-moment estimate for weak solutions of \eqref{eq:p_laplace} on annuli
\begin{multline}\label{eq:introd_1}
    \sup_{0< \tau \leq t} \int_{r\leq|x|\leq R} \left| x \right|^q u(x,\tau)dx
    \leq
    C \int_{\frac{1}{2}r\leq|x|\leq 2R} |x|^q d\mu\\
    +
    C t^{\frac{1}{2-p}}\left(
        r^{-\frac{p(N)}{(p-1)(2-p)}} + R^{-\frac{p(N)}{(p-1)(2-p)}}
    \right).
\end{multline}
for any $p_c<p<2$, where $C$ is a constant depending only on $N$ and $p$ and $p(N)$ is the polynomial 
\begin{equation*}
    p(N) \colon = (N+2)p^2 - (3N+3)p + 2N. 
\end{equation*}
This estimate shows in particular that with a finite $q$-moment Radon measure as initial data, there exists a weak solution which has uniformly bounded $q$-moments on all compact time interval for any $p_N<p<2$, where the critical value $p_c<p_N<2$ is the largest root of the polynomial $p(N)$.
This critical range $(p_N,2)$ is sharp in the sense that the Barenblatt source-type solution has finite $q$-moment if and only if $p_N<p<2$.

As applications, we show that for such a weak solution with a finite $q$-moment Radon measure as initial data, the mass conservation as well as weak convergence at $t=0$ holds for any $p_c<p<2$.
Finally, we obtain a $q$-Wasserstein convergence rate for weak solutions $u$ with a finite $q$-moment Radon measure $\mu$ as initial data for any $p_N<p<2$, namely, 
\begin{equation}
    W^q_q(\mu,u(x,t)dx) \leq C t^{q-1},
\end{equation}
where $C$ is a constant depending only on $N$, $p$, $\mu$ and $T$.
In particular, when $p$ converges to $2$, the convergence rate coincides with the well-known heat flow case in \citep{ambrosio2019pde}.

To the best of our knowledge, there does not exists $q$-moment estimates for singular $p$-Laplacian equation.
However, similar results in terms of mass conservation and connection to $q$-Wasserstein space has been done.
The mass conservation of singular parabolic $p$-Laplace equation for $p_c<p<2$ is proved by \citet{fino2014} for any weak solution where the initial data are Radon measures with compact support.
Our result only requires finite $q$-moment instead of compact support for initial data, but for any weak solution constructed through mollification.
As for the connection to the flow in the $q$-Wasserstein space, the work of \citet{kell2016q} is the closest to the present one.
There, the author shows that a smooth solution of Cauchy problem \eqref{eq:p_laplace} solves a generalized gradient flow problem of R\'enyi entropy functional in the $q$-Wasserstein space based on gradient flow and functional analysis methods.
The author further provides a condition for the mass conservation of the flow with an integrable function as initial data for $3/2<p<\infty$ in a general metric space.
In contrast, our $q$-moment estimate \eqref{eq:introd_1} is a local estimate based on pure PDE and analysis method, and, while holding for $\mathbb{R}^N$, our result shows that mass conservation holds for a different range $p_c< p< 2$.

The paper is organized as follows: in Section 2, we introduce notations and definitions and auxiliary Lemmas as well as the critical parameters $p_c$, $p(N)$ and $p_N$.
Section 3, is dedicated to the main theorem for the local $q$-moment estimate \eqref{eq:introd_1}.
Section 4 addresses the mass conservation and the weak convergence.
Finally, in Section 5 we prove the Wasserstein convergence rate.

\section{Notation and preliminary results}
On $\mathbb{R}^N$ equipped with the Euclidean norm $|\cdot|$, we denote by $B_{\rho}(x)=\{y \colon |y-x| \leq \rho\}$ the closed ball centered at $x$ with radius $\rho$, and set $B_{\rho}:=B_{\rho}(0)$.
We denote by $A_{r}^{R}=\{x \colon r\leq |x|\leq R\}$ be the closed annulus centered at $0$ with radius $r$ and $R$.
Let further $\Omega\subseteq \mathbb{R}^N$ be a measurable set.
For $1\leq p\leq \infty$, we denote by $L^p\left( \Omega \right)$ the space of $p$-integrable functions with respect to the Lebesgue measure and set $L^p(\mathbb{R}^N)=L^p$.
We denote by $W^{1,p}(\Omega)$ the space of first-order Sobolev space and set $W^{1,p}(\mathbb{R}^N)=W^{1,p}$.
For a function $u$ in $W^{1,p}(\Omega)$, we denote by $\nabla u$ the weak derivative of $u$.
For $T>0$ and $S_T=\mathbb{R}^N \times (0,T]$, we denote by $L^{\infty}_{\mathrm{loc}}(S_T)$ the space of functions which belong to $L^{\infty}(K\times [s,t])$ for all intervals $[s,t]\subset (0,T]$ and compact subsets $K$ of $\mathbb{R}^N$.
For $X=L^p(\Omega)$ or $X=W^{1,p}(\Omega)$, we denote by $C([0,T];X), C((0,T);X)$ and $L^p(0,T; X)$ in a Bochner sense.
We further denote by $\mathcal{M}^+$ the set of positive Radon measures on $\mathbb{R}^N$.

\begin{definition}
    We say that a measurable function $u:S_T\to \mathbb{R}$ is a weak solution of the Cauchy problem \eqref{eq:p_laplace} with $L^1$-initial data, that is
    \begin{equation}\label{eq:initial_data_1}
        u_0 = \mu\in L^1,
    \end{equation}
    if for every bounded open set $\Omega\subset \mathbb{R}^N$, it holds
    \begin{equation}
        \begin{cases}
            \displaystyle u\in C\left((0, T); L^1\left(\Omega\right)\right)\cap L^{p-1}\left(0,T;W^{1,p-1}\left(\Omega\right)\right)\cap L^{\infty}_{loc}\left(S_T\right),\\
            |\nabla u|\in L^{\infty}_{loc}\left(S_T\right),
        \end{cases}
    \end{equation}
    and for all $0<s<t<T$ and $\phi\in C^1(\mathbb{R}^N \times [0, T])$ such that $\mathrm{supp}(\phi(\cdot,\tau))\subset B_{\rho}$ for all $\tau\in [0, T]$ for some $\rho>0$,
    \begin{equation}\label{eq:test_condtion_1}
        \int_{\mathbb{B_{\rho}}}(\phi u)(x,t)dx
        +
        \int_{s}^{t}\int_{B_{\rho}}\left(
            -u \partial_t \phi + |\nabla u|^{p-2}\nabla u \cdot \nabla \phi 
        \right)dx d\tau
        =
        \int_{B_{\rho}}(\phi u)(x,s)dx,
    \end{equation}
    and for all $R>0$,
    \begin{equation}\label{eq:initial_cond_1}
        \lim_{\tau\searrow 0}\int_{B_R}\left|u(x,t) - u_0(x)\right|dx=0.
    \end{equation}

    The measurable function $u:S_T \to \mathbb{R}^N$ is a weak solution of the Cauchy problem \eqref{eq:p_laplace} with positive Radon measure as initial data, that is,
    \begin{equation}\label{eq:initial_data_2}
        u_0=\mu\in \mathcal{M}^+,
    \end{equation}
    if condition \eqref{eq:initial_cond_1} is replaced by
    \begin{equation}
        \lim_{\tau\searrow 0}\int_{\mathbb{R}^N}\psi(x)u(x,\tau)dx=\int_{\mathbb{R}^N}\psi(x)\mu(dx)
    \end{equation}
    for all $\psi\in C_c(\mathbb{R}^N)$.
\end{definition}

For the existence and uniqueness of weak solution of \eqref{eq:p_laplace} with initial data $u_0$ in $L^1$, we refer readers to \citep{dibenedetto1990}.
In particular, if the initial data $u_0$ is in $C^{\infty}_{c}(\mathbb{R}^N)$, then the weak solution $u$ is unique and satisfies that $u\in C([0,T]; L^1(\mathbb{R}^N))$, $\partial_t u\in L^2(0,T;L^2(\mathbb{R}^N))$ and $|\nabla u|\in L^p(0,T;L^p(\mathbb{R}^N))$ and $u\in L^{\infty}(S_T)$.
In this case, the condition \eqref{eq:test_condtion_1} is equivalent to the following condition:
\begin{equation}\label{eq:test_condtion_2}
    \int_{\mathbb{R}^N}(\phi u)(x,t)dx+\int_{0}^{t}\int_{\mathbb{R}^N}\left(-u \partial_{\tau} \phi + |\nabla u|^{p-2}\nabla u \cdot \nabla \phi \right)dx d\tau
    =
    \int_{\mathbb{R}^N}\phi(x,0)u_0dx.
\end{equation}
Furthermore, $u$ is locally $\alpha$-H\"older continuous on $\Omega^{\varepsilon}_T:=\Omega\times [\varepsilon,T]$ for all bounded subsets $\Omega\subset \mathbb{R}^N$ and $\varepsilon>0$, with $\alpha\in (0,1)$ depending only on $N,p$ and supremum norm $\|u\|_{\infty,\Omega^{\varepsilon}_T}$ on $\Omega^{\varepsilon}_T$, see \cite{chen1988}.
Moreover, if initial data $u_0\geq 0$, then the weak solution $u(x,t)\geq 0$ for all $(x,t)\in S_T$.

For the existence of weak solution of \eqref{eq:p_laplace} with a positive Radon measure as initial data, we refer readers to \citep[Theorem 1]{zhao1995} and \citep[Theorem III.8.1]{dibenedetto1990}. 
However, to our knowledge, in this case the uniqueness of weak solution is unknown and depends on the choice of the definition of the solution (viscosity, entropy, distributional, etc.).

Recall that a sequence $(\mu_n)$ of positive Radon measures converges vaguely, or weakly, to $\mu$ in $\mathcal{M}^+$ if $\int_{\mathbb{R}^N}^{} \phi d\mu_n \to \int_{R^N}^{} \phi d\mu$ for every $\phi$ in $C_c(\mathbb{R}^N$, or for every $\phi$ in $C_b(\mathbb{R}^N)$, respectively.

We briefly recall two important Lemmas for the proofs that are formulated in our notations.
\begin{lemma}\label{lemma:Junning_Lemma}
    (A priori estimate, \citet{zhao1995}).
    Let $k=N(p-2)+p$ and $u$ be the weak solution of \eqref{eq:p_laplace} with $u_0\in C^{\infty}_c(\mathbb{R}^N)$ as initial data.
    Then there exists $C_1:=C_1(N,p)$ such that for all $R>0$, it holds
    \begin{equation}\label{eq:Junning_1}
        \sup_{0<\tau\leq t}\int_{B_R} \left|u(x,\tau) \right|dx
        \leq
        C_1 \int_{B_{2R}} \left|u_0 \right|dx + C_1 R^{-\frac{k}{2-p}}t^{\frac{1}{2-p}},
    \end{equation}
    and for any $R_0$, there exists $C_2:=C_2(N,p,R_0)$ such that for all $R>R_0$, it holds
    \begin{equation}\label{eq:Junning_2}
        \sup_{x\in B_R} \left|u(x,t) \right|
        \leq
        C_2 \left(
            t^{-\frac{N}{k}}\left(
                \int_{B_{4R}}|u_0|dx
            \right)^{\frac{p}{k}} + R^{-\frac{p}{2-p}}t^{\frac{1}{2-p}}
        \right).
    \end{equation}
\end{lemma}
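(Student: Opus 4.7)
My plan is to prove both estimates by the classical energy-and-cutoff machinery for singular parabolic equations, combining a localized $L^1$ bound via a single weak test-function inequality with a De~Giorgi/Moser iteration for the $L^\infty$ smoothing effect. The exponents $k/(2-p)$, $1/(2-p)$, and $N/k$ appearing throughout are all dictated by the Barenblatt self-similar scaling $(x,t)\mapsto(\lambda x,\lambda^{k} t)$, which leaves \eqref{eq:p_laplace} invariant; the role of the threshold $p_c$ is exactly to enforce $k=N(p-2)+p>0$.

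For \eqref{eq:Junning_1}, I would fix a smooth cutoff $\zeta\in C_c^{\infty}(B_{2R})$ with $\zeta\equiv 1$ on $B_R$, $0\le\zeta\le 1$, and $|\nabla\zeta|\le C/R$. Testing the weak form \eqref{eq:test_condtion_2} against $\phi=\zeta^{\lambda}$ for $\lambda$ sufficiently large yields
\[
\int \zeta^{\lambda} u(x,t)\,dx \;=\;\int \zeta^{\lambda} u_0\,dx\;-\;\lambda\int_{0}^{t}\!\!\int |\nabla u|^{p-2}\nabla u\cdot\nabla\zeta\,\zeta^{\lambda-1}\,dx\,d\tau.
\]
Bounding the flux by H\"older's inequality in space together with Young's inequality with exponents $p/(p-1)$ and $p$, and absorbing the resulting energy term into a companion estimate obtained from the test function $u\,\zeta^{\lambda}$, produces a nonlinear integral inequality for $J(t):=\sup_{0<\tau\le t}\int\zeta^{\lambda}u(\cdot,\tau)\,dx$ of the schematic form $J(t)\le J(0)+CR^{-\beta}t^{\sigma}J(t)^{\theta}$. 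A Bihari-type lemma then solves this inequality, and matching the resulting powers against the Barenblatt scaling fixes $\beta,\sigma,\theta$ so that one recovers precisely the claimed $R^{-k/(2-p)}t^{1/(2-p)}$ term.

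For \eqref{eq:Junning_2}, I would run a Moser iteration on a shrinking family of balls $B_{R_n}$ with $R_n\downarrow R$. Testing the PDE against truncations $(u-k_n)_{+}^{q_n}\zeta_n^{\lambda}$ with a cutoff $\zeta_n$ adapted to $B_{R_n}\setminus B_{R_{n+1}}$, then combining the $L_t^{\infty}$-energy bound with the parabolic Sobolev embedding $W^{1,p}\hookrightarrow L^{Np/(N-p)}$, yields a reverse-H\"older-type inequality
\[
\|u\|_{L^{q_{n+1}}(B_{R_{n+1}}\times[\tau,t])} \;\le\; C_n\,\|u\|_{L^{q_n}(B_{R_n}\times[\tau,t])}^{1+\gamma},
\]
with $q_{n+1}=\kappa q_n$ for some $\kappa>1$. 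Iterating and summing the resulting convergent geometric series of exponents, then interpolating the terminal low-index $L^{q_0}$ norm back to the $L^1$ norm on $B_{4R}$ via \eqref{eq:Junning_1}, produces the $L^\infty$ bound on $B_R$ with exactly the two contributions $t^{-N/k}\|u_0\|_{L^1(B_{4R})}^{p/k}$ and $R^{-p/(2-p)}t^{1/(2-p)}$.

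The main obstacle is the iteration itself in the singular regime $p<2$: since the diffusion coefficient $|\nabla u|^{p-2}$ is singular where $\nabla u$ is small, the level parameters $k_n$ and the exponents $q_n$ must be tuned so that the Sobolev gain $\kappa$ strictly dominates the loss incurred at each step and the iterated constants $\prod C_n$ remain summable. It is precisely here that $p>p_c$ (equivalently $k>0$) is unavoidable, as a nonpositive $k$ would break convergence of the iterated exponents and destroy the $L^1$-to-$L^\infty$ smoothing effect underlying \eqref{eq:Junning_2}.
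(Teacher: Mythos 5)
This lemma is quoted in the paper without proof: it is attributed to \citet{zhao1995} and the surrounding text says the two lemmas are being ``recalled'', so there is no in-house argument to compare your attempt against. Judged on its own, your sketch for \eqref{eq:Junning_2} is the standard De~Giorgi--Moser blueprint, and your observation that $k>0$ is what makes the parabolic Sobolev gain strictly positive in the reverse-H\"older chain is exactly the right intuition.

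Your route to \eqref{eq:Junning_1}, however, has a concrete gap. You propose to bound the flux $\int_0^t\int|\nabla u|^{p-1}|\nabla\zeta|\,\zeta^{\lambda-1}$ by H\"older and then absorb the resulting energy $\int_0^t\int|\nabla u|^{p}\zeta^{\lambda}$ via a companion estimate from the test function $u\zeta^{\lambda}$. Testing against $u\zeta^{\lambda}$ gives $\tfrac12\tfrac{d}{dt}\int u^{2}\zeta^{\lambda}+\int|\nabla u|^{p}\zeta^{\lambda}=\cdots$; the zeroth-order quantity there is $\sup_\tau\int u^{2}\zeta^{\lambda}$, which is quadratic in $u$ and cannot be interpolated back to the target $L^1$ quantity $\sup_\tau\int u\,\zeta^{\lambda}$ without an a priori sup-bound on $u$ --- precisely what \eqref{eq:Junning_2} is supposed to deliver later, not assume. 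The device Zhao actually uses, and which this paper mirrors in its own Lemma~\ref{lemma:aux_lemma_1}, is to test the equation against a \emph{negative} power $(u+\varepsilon)^{1-\alpha}\zeta^{\lambda}$ with $\alpha\in(1,2)$. That choice yields a singular gradient estimate of the form $\int_0^t\int|\nabla u|^{p}(u+\varepsilon)^{-\alpha}\zeta^{\lambda}\le C\sup_\tau\int(u+\varepsilon)^{2-\alpha}\zeta^{\lambda}+\cdots$, and because $2-\alpha<1$ the right-hand side H\"older-interpolates on the bounded annulus back down to $\bigl(\sup_\tau\int u\,\zeta^{\lambda}\bigr)^{2-\alpha}$, after which the loop closes with ordinary Young's inequality. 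Relatedly, no Bihari-type lemma is needed or even applicable: the schematic inequality $J\le J(0)+CR^{-\beta}t^{\sigma}J^{\theta}$ with $\theta<1$ is pointwise in $t$, not a cumulative integral inequality in $t$, so Young alone absorbs it.
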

\begin{lemma}\label{lemma:Mariano_lemma}(\citep[Lemma 3.1]{mariano1983})
    Let $f\colon [r_0, r_1]\to [0, \infty)$ be a bounded function with $0\leq r_0< r_1$.
    Suppose that for any $\sigma,\sigma^\prime$ with $r_0\leq \sigma<\sigma^\prime\leq r_1$, we have
    \begin{equation*}
        f(\sigma)
        \leq
        \left[ A\left(\sigma^\prime -\sigma \right)^{-a} + B \right] +\theta f\left(\sigma'\right),
    \end{equation*}
    where $A,B,a,\theta$ are nonnegative constants with $0\leq \theta<1$.
    Then for any $\lambda,\lambda^\prime$ with $r_0\leq \lambda <\lambda^\prime\leq r_1$, it holds
    \begin{equation*}
        f(\lambda)
        \leq
        C\left(a,\theta\right) \left[A\left(\lambda^\prime - \lambda\right)^{-a} + B \right],
    \end{equation*}
    for some positive constant $C(a,\theta)$ depending only on $a$ and $\theta$.
\end{lemma}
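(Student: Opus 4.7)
The plan is to prove this iteration lemma by the standard geometric-sequence technique: iterate the hypothesis along a carefully chosen increasing sequence of points inside $[\lambda,\lambda']$, and sum the resulting geometric series to absorb the feedback term $\theta f(\sigma')$.

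First I would fix $\lambda,\lambda'$ and a parameter $\tau\in(0,1)$ to be tuned later. Define a sequence $\sigma_0=\lambda$ and $\sigma_{i+1}=\sigma_i+(1-\tau)\tau^{i}(\lambda'-\lambda)$, so that $\sigma_i\nearrow\lambda'$ and $\sigma_{i+1}-\sigma_i=(1-\tau)\tau^{i}(\lambda'-\lambda)$. Applying the hypothesis to the pair $(\sigma_i,\sigma_{i+1})$ gives
\begin{equation*}
    f(\sigma_i)\leq A(1-\tau)^{-a}\tau^{-ai}(\lambda'-\lambda)^{-a}+B+\theta f(\sigma_{i+1}).
\end{equation*}
Iterating this inequality $k$ times yields
\begin{equation*}
    f(\lambda)\leq A(1-\tau)^{-a}(\lambda'-\lambda)^{-a}\sum_{i=0}^{k-1}(\theta\tau^{-a})^{i}+B\sum_{i=0}^{k-1}\theta^{i}+\theta^{k}f(\sigma_k).
\end{equation*}

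The key tuning step is to select $\tau$ so that both geometric series converge. I would pick $\tau\in(\theta^{1/a},1)$, for instance $\tau=\theta^{1/(2a)}$, so that $\theta\tau^{-a}=\theta^{1/2}<1$; any such choice makes $\tau$ depend only on $a$ and $\theta$. With this choice, the two sums are bounded uniformly in $k$ by $(1-\theta\tau^{-a})^{-1}$ and $(1-\theta)^{-1}$ respectively.

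Finally, since $f$ is bounded on $[r_0,r_1]$, the error term $\theta^{k}f(\sigma_k)$ tends to $0$ as $k\to\infty$, and passing to the limit yields $f(\lambda)\leq C(a,\theta)\bigl[A(\lambda'-\lambda)^{-a}+B\bigr]$ with
\begin{equation*}
    C(a,\theta)=\max\!\left(\frac{(1-\tau)^{-a}}{1-\theta\tau^{-a}},\;\frac{1}{1-\theta}\right).
\end{equation*}
The only subtle point I anticipate is ensuring that the feedback constant $\theta\tau^{-a}$ stays strictly below $1$ after rescaling gaps by $\tau$; this is precisely what forces the lower bound $\tau>\theta^{1/a}$ and is the reason the boundedness of $f$ (used only to discard the remainder $\theta^k f(\sigma_k)$) is essential.
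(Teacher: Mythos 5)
Your proof is correct and is the standard geometric-iteration argument for this lemma (the paper itself does not reproduce a proof; it simply cites Giaquinta's book, where essentially this same argument appears). The construction of $\sigma_i$, the choice $\tau=\theta^{1/(2a)}$ to force $\theta\tau^{-a}=\theta^{1/2}<1$, and the use of boundedness of $f$ to kill $\theta^k f(\sigma_k)$ are all exactly right; the only (harmless) caveat is that the formula for $\tau$ degenerates in the trivial edge cases $a=0$ or $\theta=0$, where the lemma holds immediately without iteration.
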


Finally, recall the following critical values
\begin{equation}\label{eq:polynomial}
    \begin{split}
        p_c & := \frac{2N}{N+1}, \\
        p(N) &: = (N+2)p^2 - (3N + 3) p +2N, \\
        p_N &:= \text{largest root of }p(N).
    \end{split}
\end{equation}
Note that since $N\geq 2$, it holds
\begin{equation*}
    \frac{4}{3}\leq p_c < p_N <2.
\end{equation*}

\section{Moments estimates}\label{sec:model}
In this section, we derive $q$-moment estimates for a weak solution of \eqref{eq:p_laplace} with finite positive Radon measure as initial data.
We denote by $q$ the H\"older conjugate number of $p$, that is, $\frac{1}{p}+\frac{1}{q}=1$, and denote by $k$ the constant $N(p-2)+p$.
Given a $\mu\in \mathcal{M}^+$, we say that $\mu$ has finite $q$-moment if $\int_{\mathbb{R}^N}|x|^q d\mu<\infty$.
We begin to state our main result in this section.

\begin{theorem}\label{theorem:moment_estimate}
    ($q$-moment estimates.)
    Let $p_c<p<2$ and $\mu$ be in $\mathcal{M}^+$.
    Then for any $0<r<R$ and $0<t\leq T$, there exists a weak solution $u$ of \eqref{eq:p_laplace} with $\mu$ as initial data satisfying
    \begin{multline}\label{eq:thm_result_ineq}
        \sup_{0< \tau \leq t} \int_{r\leq|x|\leq R} \left| x \right|^q u(x,\tau)dx
        \leq
        C \int_{\frac{1}{2}r\leq|x|\leq 2R} |x|^q d\mu\\
        +
        C t^{\frac{1}{2-p}}\left(
            r^{-\frac{p(N)}{(p-1)(2-p)}} + R^{-\frac{p(N)}{(p-1)(2-p)}}
        \right),
    \end{multline}
    for some constant $C:=C(N,p)$ depending only on $N$ and $p$.
    Furthermore, if $\mu$ in $\mathcal{M}^+$ has finite total mass and finite $q$-moment, and $p(N)>0$, then for any $0<t\leq T$, the family $\{u(x,\tau)dx\colon 0\leq \tau\leq t\}$ has uniformly bounded $q$-moment on $[0, t]$ and that
    \begin{equation}\label{eq:q_moment_uniformly_integrable}
        \lim_{r\rightarrow \infty}\sup_{0<\tau\leq t}\int_{|x|\geq r}|x|^q u(x,\tau)dx
        =
        0.
    \end{equation}
\end{theorem}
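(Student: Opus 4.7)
The plan is to construct the weak solution $u$ as a vague limit of classical solutions $u_n$ to approximations of the initial data, prove the bound \eqref{eq:thm_result_ineq} uniformly along the sequence, and then derive the uniform integrability from it. First, approximate $\mu$ by a sequence $\mu_n \in C_c^\infty(\mathbb{R}^N)$ with $\mu_n \geq 0$ converging vaguely to $\mu$ (with its $q$-moment on any fixed annulus converging to that of $\mu$). The corresponding $u_n$ are then smooth nonnegative weak solutions satisfying the global test identity \eqref{eq:test_condtion_2} and are subject to the $L^1_{\mathrm{loc}}$ and $L^\infty_{\mathrm{loc}}$ a priori bounds of Lemma~\ref{lemma:Junning_Lemma}.

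Next, for fixed $0 < r < R$, introduce a radial cutoff $\zeta \in C_c^\infty(\mathbb{R}^N)$ equal to $1$ on $\{r \leq |x| \leq R\}$ and supported in $\{r/2 \leq |x| \leq 2R\}$, with $|\nabla \zeta| \leq C/r$ on the inner strip and $|\nabla \zeta| \leq C/R$ on the outer strip, and test \eqref{eq:p_laplace} with $\phi = |x|^q \zeta^\beta$ for a sufficiently large power $\beta$. Splitting $\nabla \phi$ into the weight part $q|x|^{q-2}x\zeta^\beta$ and the cutoff part $\beta|x|^q\zeta^{\beta-1}\nabla\zeta$, both multiply the flux $|\nabla u_n|^{p-2}\nabla u_n$. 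Each is estimated by Young's inequality, and the resulting $|\nabla u_n|^p$ integral is absorbed into a companion energy identity obtained by testing with $u_n\zeta^\beta$. The residual $L^1$-type integrals on annular strips of width of order $r$ or $R$ are then controlled by Lemma~\ref{lemma:Junning_Lemma}, and the specific combination of $q$, the exponent $k=(N+1)p-2N$ of the lemma, the weight $|x|^q$, and the cutoff scale $1/r$ or $1/R$ is precisely what produces the exponent $p(N)/((p-1)(2-p))$ in \eqref{eq:thm_result_ineq}. Residual terms of the form $\theta \sup_{\tau}\int u_n(\tau)|x|^q\zeta^{\beta'}dx$ with $\theta<1$ are absorbed by iterating over a one-parameter family of nested cutoffs via Lemma~\ref{lemma:Mariano_lemma}.

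To pass to the limit, the local H\"older equicontinuity of $u_n$ away from $t=0$ from \cite{chen1988} together with the uniform $L^1_{\mathrm{loc}}$ bound allows extraction of a subsequence $u_n \to u$ locally uniformly on $\mathbb{R}^N\times(0,T]$, with $u$ a weak solution having initial datum $\mu$ by vague convergence of $\mu_n$ against $\psi \in C_c(\mathbb{R}^N)$. The inequality \eqref{eq:thm_result_ineq} survives in the limit via Fatou on the left and vague convergence on the right, after replacing the sharp annulus $\{r/2\leq|x|\leq 2R\}$ by slightly larger continuous cutoffs if needed. For the second statement, assuming $p(N)>0$ and that $\mu$ has finite total mass and finite $q$-moment, letting $R \to \infty$ in \eqref{eq:thm_result_ineq} with $r$ fixed keeps the right-hand side finite and gives a global $q$-moment bound uniform in $\tau\in[0,t]$; subsequently letting $r \to \infty$ in the resulting estimate drives both $\int_{|x|\geq r/2}|x|^q d\mu$ (by dominated convergence from finiteness of the $q$-moment) and $r^{-p(N)/((p-1)(2-p))}$ (by the sign of $p(N)$) to zero, yielding \eqref{eq:q_moment_uniformly_integrable}.

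The main obstacle is the bookkeeping in the second paragraph: tuning the Young split and the iteration exponent $\beta$ so that the residual $L^1$ piece from Lemma~\ref{lemma:Junning_Lemma}, the moment weight $|x|^q$, and the cutoff gradient combine into the precise power $p(N)/((p-1)(2-p))$. The polynomial $p(N)$ is exactly the balance that drops out of this calculation, and its sign governs whether the estimate remains nontrivial as the outer radius grows, which is also why the uniform integrability conclusion requires $p(N)>0$ (i.e.\ $p_N<p<2$) while the local estimate itself is valid throughout $p_c<p<2$.
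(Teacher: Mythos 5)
Your overall architecture---mollify and cut off $\mu$, prove the annular estimate uniformly along the approximating sequence, iterate with Lemma~\ref{lemma:Mariano_lemma}, pass to the limit using the H\"older equicontinuity of \citep{chen1988}, then send $R\to\infty$ and $r\to\infty$---is the same as the paper's. The gap is in the core step, where the mechanism you propose would not close.

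After testing with $\phi=|x|^q\zeta^\beta$, the term to control is of the form $\int_0^t\int|\nabla u|^{p-1}\zeta^{p-1}|x|^{1/(p-1)}\,dx\,d\tau$. You propose to Young-split this into $\delta\int|\nabla u|^p\zeta$ plus a remainder, and to absorb the $|\nabla u|^p$ piece through a companion energy identity from testing with $u_n\zeta^\beta$. Both halves break. The Young remainder carries no factor of $u$, so it is $\int_{r/2\leq|x|\leq 2R}|x|^q\,dx\sim R^{N+q}$, which grows, whereas the target right-hand side of \eqref{eq:thm_result_ineq} decays in $R$. And the energy identity produces a boundary term $\int u_{0n}^2\zeta^\beta$ at $\tau=0$, which blows up as $n\to\infty$ when $\mu$ is a genuine measure, so no uniform bound survives the limit. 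Neither defect can be fixed by tuning $\beta$ alone; the remainder must carry $u$, and the boundary term at $\tau=0$ must be suppressed.

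The paper does this by a H\"older split that inserts a negative power of $u$, schematically
\begin{equation*}
  \int\!\!\int |\nabla u|^{p-1}\zeta^{p-1}|x|^{\frac{1}{p-1}}
  \leq
  \left(\int\!\!\int \tau^{\beta q}|\nabla u|^{p}\zeta^{p}u_\varepsilon^{-\alpha q}\right)^{\frac{1}{q}}
  \left(\int\!\!\int \tau^{-\beta p}|x|^{q}u_\varepsilon^{\alpha p}\right)^{\frac{1}{p}},
\end{equation*}
with $\alpha q>1$ and $\alpha p\leq 1$; arranging both of these forces the case split on $p>1/(p-1)$ versus $p\leq 1/(p-1)$ (with $\alpha=1/p$ or $\alpha=p-1$). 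The second factor is then a sublinear power of $\sup_\tau\int|x|^qu$, which is exactly what gets absorbed via Young at the very end. The first factor is controlled not by the standard energy identity but by testing \eqref{eq:p_laplace} with $\psi=\tau^{\beta q}u_\varepsilon^{1-\alpha q}\xi^p$ (Lemma~\ref{lemma:aux_lemma_1}); the time weight $\tau^{\beta q}$ is essential because $1-\alpha q<0$, so without it the boundary term at $\tau=0$ would involve a negative power of $u_{0\varepsilon}$ and would be $\varepsilon$-singular. The resulting terms $\int u_\varepsilon^{p-\alpha q}|\nabla\xi|^p$ and $\int u_\varepsilon^{2-\alpha q}\xi^p$ are handled by further H\"older applications yielding powers of $\sup_\tau\int|x|^qu$ times explicit powers of $r$, $R$, $t$; that bookkeeping is where $p(N)$ arises. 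Lemma~\ref{lemma:Junning_Lemma} plays a different role than you assign it: it only furnishes the a priori finiteness of $M_q(\sigma)$ needed to invoke Lemma~\ref{lemma:Mariano_lemma}, and, at the end (passed through the limit), the mass bound needed to control $\int_{|x|<1}|x|^qu$; note that sending $R\to\infty$ in \eqref{eq:thm_result_ineq} controls $\sup_\tau\int_{|x|\geq r}|x|^qu$ but not the inner ball, so you still owe that piece. The limit passage and the derivation of \eqref{eq:q_moment_uniformly_integrable} are otherwise as you describe.
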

\begin{remark}
    Let $\{X_{\tau}\colon 0\leq \tau\leq t\}$ be a family of $\mathbb{R}^N$-valued random variables such that the law of $X_\tau$ is $u(x,\tau)dx$.
    Then equality \eqref{eq:q_moment_uniformly_integrable} is equivalent to say that $\{|X_{\tau}|^q\colon 0\leq \tau\leq t\}$ is uniformly integrable for any $0<t\leq T$.
\end{remark}

\begin{remark}
    Note that $(p_N,2)$ is a sharp range for $p$ in the following sense: let $U$ be the Barenblatt source-type solution, that is,
    \begin{equation}
        U(x,t)
        =t^{-\frac{N}{k}}F(t^{-\frac{1}{k}}x),
    \end{equation}
    where $F(\xi)=(C+\gamma|\xi|^{\frac{p}{p-1}})^{\frac{p-1}{p-2}}$ for $\gamma=(p-1)k/(p(2-p))$ and some positive constant $C$.
    It is well-known that $w$ is a weak solution of \eqref{eq:p_laplace} with initial data $\mu=M\delta_0$ where $M=\| w(t) \|_{L^1}$.
    Computations show that $U$ has finite $q$-moment if and only if $p_N<p<2$.
\end{remark}

To show Theorem \ref{theorem:moment_estimate}, we first establish the following auxiliary lemma.
\begin{lemma}\label{lemma:aux_lemma_1}
    Let $p\in (p_c,2)$ and $\xi\in C^1_c(\mathbb{R}^N)$ be a smooth function such that $\mathrm{supp}(\xi)\subset \Omega$ for some bounded domain $\Omega$.
    Let $u$ be the positive weak solution of \eqref{eq:p_laplace} with initial data $u_0\in C^{\infty}_c(\mathbb{R}^N),u_0\geq 0$.
    Then there exists a positive constant $C:=C(N,p)$ such that for any $\varepsilon>0$, 
    \begin{multline*}
        \int_{0}^{t}\int_{\Omega}\xi^p \tau^{\beta q}|\nabla u|^p (u+\varepsilon)^{-\alpha q}dxd\tau
        \leq
        C \int_{0}^{t}\int_{\Omega}\tau^{\beta q}(u+\varepsilon)^{p-\alpha q}|\nabla \xi|^{p}dxd\tau\\
        +
        C \int_{\Omega}t^{\beta q}(u+\varepsilon)^{2-\alpha q}\xi^pdx,
    \end{multline*}
    where $\beta=\alpha/2$ and $\alpha=1/p$ if $p>1/(p-1)$ and $\alpha=p-1$ if $p\leq 1/(p-1)$.
\end{lemma}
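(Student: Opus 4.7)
The plan is to choose the test function
\begin{equation*}
\phi = \xi^p \tau^{\beta q}(u+\varepsilon)^{1-\alpha q}
\end{equation*}
in the weak formulation \eqref{eq:test_condtion_2}, setting $v := u+\varepsilon \geq \varepsilon > 0$. Because the weak solution with $u_0 \in C^\infty_c$ is bounded and H\"older continuous away from $\tau=0$ (see \cite{chen1988}), and $v^{1-\alpha q}$ is bounded on $\{v \geq \varepsilon\}$, $\phi$ is an admissible test function after the standard Steklov averaging of $u$ in time. The crucial sign information, valid in both cases of the statement, is $1 < \alpha q < 2$: indeed $\alpha q = 1/(p-1)$ or $\alpha q = p$, and the range $p_c < p < 2$ places both values inside the open interval $(1,2)$; as a consequence $\beta q \in (1/2,1)$ so the weight $\tau^{\beta q}$ vanishes at $\tau = 0$ and $\tau^{\beta q - 1}$ remains integrable.

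For the parabolic term I would exploit the identity $v^{1-\alpha q}\partial_\tau v = (2-\alpha q)^{-1}\partial_\tau v^{2-\alpha q}$ and integrate by parts in $\tau$. Since $\tau^{\beta q}$ kills the boundary contribution at $\tau = 0$, only the boundary at $\tau = t$ survives, producing $(2-\alpha q)^{-1}\int t^{\beta q}\xi^p v^{2-\alpha q}(t)\,dx$ together with a sign-favourable interior term $-\tfrac{\beta q}{2-\alpha q}\int_0^t\int \tau^{\beta q-1}\xi^p v^{2-\alpha q}\,dxd\tau$ which I discard. Expanding $\nabla \phi$ in the divergence integral splits off the coercive contribution $(\alpha q-1)\int_0^t\int \xi^p\tau^{\beta q}v^{-\alpha q}|\nabla v|^p\,dxd\tau$, whose coefficient is positive thanks to $\alpha q > 1$, and a cross term proportional to $\xi^{p-1}|\nabla \xi||\nabla v|^{p-1}v^{1-\alpha q}\tau^{\beta q}$.

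The cross term I would handle by a weighted Young inequality with conjugate exponents $(p,q)$,
\begin{equation*}
p\, \xi^{p-1}\tau^{\beta q}v^{1-\alpha q}|\nabla v|^{p-1}|\nabla \xi| \leq \eta\, \xi^p\tau^{\beta q}v^{-\alpha q}|\nabla v|^p + C_\eta \tau^{\beta q}v^{p-\alpha q}|\nabla \xi|^p,
\end{equation*}
where the exponent $p-\alpha q$ on $v$ in the second factor arises precisely from the identity $(1-\alpha q/p)\cdot p = p-\alpha q$, which in turn is the reason for the two specific choices of $\alpha$ in the statement. Choosing $\eta = (\alpha q-1)/2$ and absorbing this piece into the coercive integral on the left yields the claimed inequality with a constant depending only on $\alpha q$ and $2-\alpha q$, and therefore only on $N$ and $p$.

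The conceptual content is modest; the genuine obstacle is purely the exponent bookkeeping, namely verifying that the $v$- and $\tau$-weights emerging from the Young split land on $p-\alpha q$ and $2-\alpha q$ exactly as in the statement, together with the routine Steklov-average argument needed to legitimise a $u$-dependent test function. The critical threshold $p > p_c$ is used only implicitly, through the regularity of $u$ that makes $\phi$ admissible.
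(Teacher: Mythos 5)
Your proposal is correct and follows essentially the same route as the paper: test the equation against $\psi=\tau^{\beta q}(u+\varepsilon)^{1-\alpha q}\xi^p$, recognise the time term as $\frac{1}{2-\alpha q}\partial_\tau\bigl(\tau^{\beta q}(u+\varepsilon)^{2-\alpha q}\xi^p\bigr)$ minus a sign-favourable interior piece which can be dropped, use $\alpha q-1>0$ for coercivity in the diffusion term, and absorb the cross term by Young's inequality with exponents $(p,q)$, which lands the weight $(u+\varepsilon)^{p-\alpha q}$ on $|\nabla\xi|^p$. The paper justifies the choice of test function via $\partial_\tau u\in L^2(0,T;L^2)$ and boundedness of $u$ rather than Steklov averaging, and your claim that $1/(p-1)\in(1,2)$ for all $p\in(p_c,2)$ is only true on the subrange $p>1/(p-1)$ where that value of $\alpha q$ is actually used, but these are cosmetic differences and do not affect correctness.
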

\begin{proof}
    Let $\varepsilon>0$ be fixed and $u_{\varepsilon}=u+\varepsilon$ and $\psi(x,\tau)=\tau^{\beta q}u_{\varepsilon}^{1-\alpha q}\xi^p$.
    Note that $u_{\varepsilon}^{1-\alpha p}$ is bounded since $-1<1-\alpha p<0$.

    On the one hand, by the regularity of $u$ on $(0,T)$ it follows that
    \begin{equation*}
        \partial_\tau \left( \tau^{\beta q} u_\varepsilon^{2-\alpha q}\xi^p \right) 
        = 
        \left( 2 - \alpha q \right)\psi \partial_{\tau}u 
        + 
        \beta q \tau^{\beta q-1} u_{\varepsilon}^{2-\alpha q} \xi^p
        \geq 
        \left(2-\alpha q \right)\psi \partial_{\tau}u.
    \end{equation*}
    Since $0<2-\alpha q<1$, with $\partial_{\tau}u$ in $L^2(0,T;L^2(\mathbb{R}^N))$ and $u$ in $C([0, T]; L^1(\mathbb{R}^N))$ and boundedness of $\psi$, we can integrate over $\Omega \times (0,t)$ on both sides and get
    \begin{equation}\label{eq:tmp_1}
        \int_{0}^{t}\int_{\Omega}^{} \psi \partial_\tau u dx d\tau
        \leq 
        \frac{1}{2-\alpha q}\int_{\Omega} t^{\beta q}u_{\varepsilon}^{2-\alpha q}(x,t)\xi^p dx.
    \end{equation}
    By the regularity of $u$ it follows that
    \begin{equation}\label{eq:tmp_1_1}
        |\nabla u|^{p-2}\nabla u \nabla \psi
        =
        p \xi^{p-1}|\nabla u|^{p-2}\tau^{\beta q}u_{\varepsilon}^{1-\alpha q}\nabla u \nabla \xi
        -
        \left(\alpha q-1 \right)|\nabla u|^{p} \xi^{p}\tau^{\beta q}u_{\varepsilon}^{-\alpha q}.
    \end{equation}
    For the first term on the right hand side of \eqref{eq:tmp_1_1}, applying Young's inequality $a\cdot b\leq \varepsilon |a|^q + \varepsilon^{-\frac{1}{q-1}}|b|^p$ for $\varepsilon = (\alpha q-1)/(2p)$ and 
    \begin{equation*}
        a = \xi^{p-1}|\nabla u|^{p-2} \tau^{\beta} u_{\varepsilon}^{-\alpha}\nabla u \quad 
        \text{and} \quad 
        b = \tau^{\beta (q-1)}u_{\varepsilon}^{1-\alpha (q-1)} \nabla \xi.
    \end{equation*}
    Equation \eqref{eq:tmp_1_1} yields
    \begin{multline}\label{eq:tmp_2_2}
        |\nabla u|^{p-2}\nabla u \nabla \psi
        \leq
        -\frac{1}{2}\left(\alpha q-1\right)\xi^p |\nabla u|^p \tau^{\beta q}u_{\varepsilon}^{-\alpha q}\\
        +
        p\left(\frac{2p}{\alpha q-1}\right)^{\frac{1}{q-1}} \tau^{\beta q}u_{\varepsilon}^{p-\alpha q}|\nabla \xi|^p.
    \end{multline}
    Since $|\nabla u|$ is in $L^p(0,T; L^p(\mathbb{R}^N))$ and $u_{\varepsilon}^{-\alpha q}$ is bounded and $0<p-\alpha q<1$, it follows that
    \begin{equation*}
        \int_{0}^{t}\int_{\Omega}\xi^p |\nabla u|^p \tau^{\beta q}u_{\varepsilon}^{-\alpha q}dx d\tau < \infty
        \quad \text{and} \quad
        \int_{0}^{t}\int_{\Omega} \tau^{\beta q}u_{\varepsilon}^{p-\alpha q}|\nabla \xi|^p dx d\tau < \infty.
    \end{equation*}
    Hence, integrating \eqref{eq:tmp_2_2} over $\Omega \times (0,t)$ yields
    \begin{multline}\label{eq:tmp_2}
        \int_{0}^{t}\int_{\Omega}|\nabla u|^{p-2}\nabla u \nabla \psi dxd\tau
        \leq
        -\frac{1}{2}\left(\alpha q-1\right)\int_{0}^{t}\int_{\Omega}\xi^p |\nabla u|^p \tau^{\beta q}u_{\varepsilon}^{-\alpha q}dx d\tau\\
        +
        p\left(\frac{2p}{\alpha q-1}\right)^{\frac{1}{q-1}}\int_{0}^{t}\int_{\Omega} \tau^{\beta q}u_{\varepsilon}^{p-\alpha q}|\nabla \xi|^p dx d\tau.
    \end{multline}

    On the other hand, multiplying \eqref{eq:p_laplace} by $\psi$ and integrating by part, it follows that
    \begin{equation*}
        \int_{0}^{t}\int_{\Omega} \psi \partial_{\tau} u dx d\tau
        +
        \int_{0}^{t}\int_{\Omega} |\nabla u|^{p-2}\nabla u \nabla \psi dx d\tau
        =
        0.
    \end{equation*}
    Hence, adding \eqref{eq:tmp_1} and \eqref{eq:tmp_2} together yields
    \begin{multline*}
        \int_{0}^{t}\int_{\Omega} \xi^p |\nabla u|^{p} \tau^{\beta q}u_{\varepsilon}^{-\alpha q}dx d\tau
        \leq
        \left(\frac{2p}{\alpha q -1}\right)^p \int_{0}^{t}\int_{\Omega}\tau^{\beta q}u_{\varepsilon}^{p-\alpha q}|\nabla \xi|^{p}dxd\tau\\
        +
        \frac{2}{\left(\alpha q-1\right)\left(2-\alpha q\right)}\int_{\Omega} t^{\beta q}u_{\varepsilon}^{2-\alpha q}(x,t)\xi^p dx d\tau.
    \end{multline*}
    Taking
    \begin{equation*}
        C(N, p)=\max\left\{\left(\frac{2p}{\alpha q - 1}\right)^{-p}, \frac{2}{\left(\alpha q-1\right)\left(2-\alpha q\right)}\right\}  
    \end{equation*}
    yields the result.
\end{proof}

We address now the proof of Theorem \ref{theorem:moment_estimate}.
In the following proof, we denote by $C$ a generic positive constant depending only on $p$ and $N$.
\begin{proof}[Proof of Theorem \ref{theorem:moment_estimate}]
    In a first step, we show inequality \eqref{eq:thm_result_ineq} in the case where $u$ is the positive weak solution of \eqref{eq:p_laplace} with initial data $u_0$ in $C^{\infty}_{c}(\mathbb{R}^N)$ and $u_0\geq 0$.
    Let $R>r>0$ and $1\leq \sigma<\sigma^{\prime}\leq 2$. 
    Let $A_{\sigma}, A_{\sigma^{\prime}}$ denote closed annulus $A_{r/\sigma}^{R\sigma}$ and $A_{r/\sigma^{\prime}}^{R\sigma^{\prime}}$ respectively.
    Let $\xi$ be a smooth cut-off function such that $\xi=1$ in $A_{\sigma}$ and $\xi=0$ in $A^c_{\sigma^{\prime}}$ and $|\nabla \xi|\leq \frac{\sigma^{\prime}\sigma}{(\sigma'-\sigma)r}$ in $A^r:=A_{r/\sigma^{\prime}}^{r/\sigma}$ and $|\nabla \xi|\leq \frac{1}{(\sigma'-\sigma)R}$ in $A^R:=A_{R\sigma}^{R\sigma^{\prime}}$.
    Let $\phi(x,\tau)=|x|^q \xi^p \chi_{[0,T]}(\tau)$ be the test function where $\chi_{[0,T]}$ is indicator function.
    Then for any $0<t\leq T$, by \eqref{eq:test_condtion_2}, it follows that
    \begin{multline}\label{eq:thm_1_1}
        \int_{A_{\sigma}}|x|^q u(x,t)dx
        \leq
        \int_{A_{\sigma'}}|x|^q u_0 dx
        +
        p \int_{0}^{t}\int_{A_{\sigma'}}|\nabla u|^{p-1}\xi^{p-1} |x|^q |\nabla \xi| dx d\tau\\
        +
        q \int_{0}^{t}\int_{A_{\sigma'}}|\nabla u|^{p-1}\xi^p |x|^{\frac{1}{p-1}}dxd\tau.
    \end{multline}
    For the second term on the right hand side of \eqref{eq:thm_1_1}, since $|\nabla \xi|=0$ in $A_{\sigma}$ and $|\nabla \xi|\leq \frac{\sigma'\sigma}{(\sigma'-\sigma)r}$ and $|\nabla\xi|\leq \frac{1}{(\sigma'-\sigma)R}$ in $A^r$ and $A^R$ respectively, and note that $\sigma, \sigma'\leq 2$, it follows that
    \begin{multline}\label{eq:thm_1_2}
        \int_{0}^{t}\int_{A_{\sigma'}} |\nabla u|^{p-1}\xi^{p-1}|x|^q |\nabla \xi|dxd\tau
        =
        \int_{0}^{t}\int_{A^r \cup A^R} |\nabla u|^{p-1}\xi^{p-1}|x|^{\frac{1}{p-1}} \left(|x| |\nabla \xi| \right) dxd\tau\\
        \leq
        \frac{4}{\sigma^{\prime}-\sigma}\int_{0}^{t}\int_{A_{\sigma'}}|\nabla u|^{p-1}\xi^{p-1}|x|^{\frac{1}{p-1}}dxd\tau.
    \end{multline}
    Plugging \eqref{eq:thm_1_2} into \eqref{eq:thm_1_1} and use the fact that $\xi^{p}\leq \xi^{p-1}$ and $1\leq 1/(\sigma^{\prime}-\sigma)$ yields
    \begin{equation}\label{eq:thm_1_3}
        \int_{A_{\sigma}}|x|^q u(x,t)dx
        \leq
        \int_{A_{\sigma'}}|x|^q u_0 dx
        +
        \frac{C}{\sigma'-\sigma}\int_{0}^{t}\int_{A_{\sigma'}}|\nabla u|^{p-1}\xi^{p-1}|x|^{\frac{1}{p-1}}dxd\tau.
    \end{equation}
    Let $u_{\varepsilon}:=u+\varepsilon$ for $\varepsilon>0$.
    We consider the second term on the right hand side of \eqref{eq:thm_1_3} for different cases of $p$:
    \begin{enumerate}[label=\textit{Case \arabic*:}, fullwidth]
        \item If $p>1/(p-1)$, then by H\"older's inequality, it follows that
            \begin{multline}\label{eq:thm_1_4}
                \int_{0}^{t}\int_{A_{\sigma'}}|\nabla u|^{p-1}\xi^{p-1}|x|^{\frac{1}{p-1}}dxd\tau\\
                \leq
                \left(
                    \int_{0}^{t}\int_{A_{\sigma'}}\tau^{\beta q}|\nabla u|^{p} \xi^{p} u_{\varepsilon}^{-\alpha q}dxd\tau
                \right)^{\frac{1}{q}}
                \left(
                    \int_{0}^{t}\int_{A_{\sigma'}}\tau^{-\beta p}|x|^{q}u_{\varepsilon}^{\alpha p} dxd\tau
                \right)^{\frac{1}{p}},
            \end{multline}
            where $\alpha=1/p$ and $\beta=1/(2p)$.

            For the first term on the right hand of \eqref{eq:thm_1_4}, by Lemma \ref{lemma:aux_lemma_1} and upper bounds of $|\nabla \xi|$ in $A^r$ and $A^R$, it follows that 
            \begin{multline}\label{eq:thm_1_5}
                \int_{0}^{t}\int_{A_{\sigma'}}\tau^{\beta q}|\nabla u|^{p} \xi^{p} u_{\varepsilon}^{-\alpha q}dxd\tau
                \leq
                \frac{C}{[(\sigma'-\sigma)r]^p} \int_{0}^{t}\int_{A^r}\tau^{\beta q}u_{\varepsilon}^{p-\alpha q}dxd\tau\\
                +
                \frac{C}{[(\sigma'-\sigma)R]^p} \int_{0}^{t}\int_{A^R}\tau^{\beta q}u_{\varepsilon}^{p-\alpha q}dxd\tau
                +
                C \int_{A_{\sigma'}}t^{\beta q}u_{\varepsilon}^{2-\alpha q} dx.
            \end{multline}
            For the first term on the right hand side of \eqref{eq:thm_1_5}, since  $0<p-1/(p-1)<1$, H\"older's inequality yields
            \begin{multline}\label{eq:thm_1_6}
                \int_{0}^{t}\int_{A^r}\tau^{\frac{q}{2p}}u_{\varepsilon}^{p-\frac{1}{p-1}}dxd\tau
                =
                \int_{0}^{t}\int_{A^r} \left( \tau^{\frac{p}{2}}|x|^{-q\left(p-\frac{1}{p-1}\right)}\right)
                \left( |x|^{q}\tau^{-\frac{1}{2}}u_{\varepsilon}\right)^{p-\frac{1}{p-1}}dxd\tau\\
                \leq
                \left(
                    \int_{0}^{t}\int_{A^r} \tau^{\frac{p}{2(q-p)}}|x|^{-\frac{q}{q-p}\left(p-\frac{1}{p-1}\right)} dxd\tau
                \right)^{1-p+\frac{1}{p-1}}
                \left(
                    \int_{0}^{t}\int_{A^r}\tau^{-\frac{1}{2}}|x|^{q} u_{\varepsilon} dxd\tau
                \right)^{p-\frac{1}{p-1}}\\
                \leq
                C t^{q-\frac{q}{2p}} r^{N(q-p)-q\left(p-\frac{1}{p-1}\right)}
                \left(
                    \sup_{0<\tau\leq t}\int_{A_{\sigma'}}|x|^{q} u_{\varepsilon} dx
                \right)^{p-\frac{1}{p-1}}.
            \end{multline}
            The second term on the right hand side of \eqref{eq:thm_1_5} is the same as the first term, replacing $r$ by $R$.
            As for the third term on the right hand side of \eqref{eq:thm_1_5}, H\"older's inequality yields
            \begin{multline}\label{eq:thm_1_7}
                \int_{A_{\sigma'}}t^{\frac{q}{2p}}u_{\varepsilon}^{2-\frac{1}{p-1}}dx
                \leq
                t^{\frac{q}{2p}}
                \left(
                    \int_{A_{\sigma'}}|x|^{-\frac{p(2p-3)}{(p-1)(2-p)}}dx
                \right)^{\frac{1}{p-1}-1}
                \left(
                    \int_{A_{\sigma'}}|x|^q u_{\varepsilon} dx
                \right)^{2-\frac{1}{p-1}}\\
                \leq
                C t^{\frac{q}{2p}}\left(\max\left\{r^{-\frac{p(N)}{(p-1)(2-p)}},R^{-\frac{p(N)}{(p-1)(2-p)}}\right\}\right)^{\frac{2-p}{p-1}}
                \left(
                    \sup_{0<\tau\leq t}\int_{A_{\sigma'}}|x|^q u_{\varepsilon} dx
                \right)^{\frac{2p-3}{p-1}}.
            \end{multline}

            We now turn to the second term on right hand side of \eqref{eq:thm_1_4} for which holds
            \begin{equation}\label{eq:thm_1_7_1}
                \left(\int_{0}^{t}\int_{A_{\sigma^{\prime}}}\tau^{-\frac{1}{2}}|x|^q u_{\varepsilon}dx d\tau\right)^{\frac{1}{p}}
                \leq
                C t^{\frac{1}{2p}}\left(\sup_{0<\tau\leq t}\int_{A_{\sigma^{\prime}}}|x|^q u_{\varepsilon}dx\right)^{\frac{1}{p}}.
            \end{equation}
            Plugging \eqref{eq:thm_1_6}, \eqref{eq:thm_1_7} into \eqref{eq:thm_1_5}, and then applying inequality $(a+b)^{1/q}\leq a^{1/q}+b^{1/q}$ for $a,b>0$ to \eqref{eq:thm_1_5}, and plugging it into \eqref{eq:thm_1_4} together with \eqref{eq:thm_1_7_1} and taking $\varepsilon \searrow 0$, it follows that
            \begin{multline}\label{eq:thm_1_8}
                \frac{C}{\sigma^{\prime}-\sigma}\int_{0}^{t}\int_{A_{\sigma'}}|\nabla u|^{p-1}\xi^{p-1}|x|^{\frac{1}{p-1}}dxd\tau\\
                \leq
                C (\sigma'-\sigma)^{-p} t \left( r^{1-k-\left(p-\frac{1}{p-1}\right)} + R^{1-k-\left(p-\frac{1}{p-1}\right)}\right)
                M_q(\sigma^{\prime})^{p-1}\\
                + 
                C (\sigma'-\sigma)^{-1} t^{\frac{1}{p}}\left(\max\left\{r^{-\frac{p(N)}{(p-1)(2-p)}},R^{-\frac{p(N)}{(p-1)(2-p)}}\right\}\right)^{\frac{2-p}{p}}
                M_q(\sigma^{\prime})^{\frac{2(p-1)}{p}},
            \end{multline}
            where $M_q(\sigma^{\prime})=\sup_{0<\tau\leq t}\int_{A_{\sigma^{\prime}}}|x|^q u dx$.
            Applying Young's inequaliy $ab\leq \varepsilon b^{1/(p-1)} + C_{\varepsilon}a^{1/(2-p)}$ and $ab\leq \varepsilon b^{p/(2p-2)}+ C_{\varepsilon}a^{p/(2-p)}$ to the first and second terms of the right hand side of \eqref{eq:thm_1_8}, respectively, whereby $\varepsilon=1/4$, and using identity $1-k-p+1/(p-1)=-p(N)/(p-1)$, it follows that
            \begin{multline}\label{eq:thm_case_1}
                \frac{C}{\sigma^{\prime}-\sigma}\int_{0}^{t}\int_{A_{\sigma^{\prime}}}|\nabla u|^{p-1}\xi^{p-1}|x|^{\frac{1}{p-1}} dx d\tau
                \leq
                \frac{1}{2}\sup_{0<\tau\leq t}\int_{A_{\sigma^{\prime}}}|x|^q u dx\\
                +
                C (\sigma'-\sigma)^{-\frac{p}{2-p}}t^{\frac{1}{2-p}}
                \left(
                    r^{-\frac{p(N)}{(p-1)(2-p)}}
                    +
                    R^{-\frac{p(N)}{(p-1)(2-p)}}
                \right).
            \end{multline}

        \item If $p\leq 1/(p-1)$, by H\"older's inequality, it follows that
            \begin{multline}\label{eq:thm_3_1}
                \int_{0}^{t}\int_{A_{\sigma^{\prime}}} |\nabla u|^{p-1} \xi^{p-1}|x|^{\frac{1}{p-1}} dx d\tau\\
                \leq
                \left( \int_{0}^{t}\int_{A_{\sigma^{\prime}}}\tau^{\beta q}|\nabla u|^p u_{\varepsilon}^{-\alpha q}\xi^{p}|x|^{\left(\frac{1}{p-1}-p\right)q} dx d\tau \right)^{\frac{1}{q}}
                \left( \int_{0}^{t}\int_{A_{\sigma^{\prime}}}\tau^{-\beta p}|x|^{p^2}u_{\varepsilon}^{\alpha p} dx d\tau\right)^{\frac{1}{p}},
            \end{multline}
            where $\alpha=p-1$ and $\beta=(p-1)/2$.

            For the first term on the right hand side of \eqref{eq:thm_3_1}, taking $\tilde{\xi}=\xi |x|^{(1/(p-1)-p)/(p-1)}$ and applying Lemma \ref{lemma:aux_lemma_1}, yields
            \begin{multline}\label{eq:thm_3_2}
                \int_{0}^{t}\int_{A_{\sigma^{\prime}}} \tau^{\beta q}|\nabla u|^p u_{\varepsilon}^{-\alpha q}\left(\xi |x|^{\left(\frac{1}{p-1}-p\right)\frac{1}{p-1}}\right)^{p} dx d\tau
                \leq
                C\int_{0}^{t}\int_{A_{\sigma^{\prime}}}\tau^{\beta q}u_{\varepsilon}^{p-\alpha q} |\nabla \tilde{\xi}|^p dx d\tau\\
                +
                C \int_{A_{\sigma^{\prime}}} t^{\beta q} u_{\varepsilon}^{2- \alpha q} \tilde{\xi}^p dx.
            \end{multline}

            For the first term on the right hand side of \eqref{eq:thm_3_2}, note that we have
            \begin{equation*}
                \nabla \tilde{\xi}
                =
                |x|^{\left(\frac{1}{p-1}-p\right)\frac{1}{p-1}}\nabla \xi
                +
                \xi \left(\frac{1}{p-1}-p\right)\frac{1}{p-1} |x|^{\left(\frac{1}{p-1}-p\right)\frac{1}{p-1}-2} x.
            \end{equation*}
            Hence, applying inequality $|a+b|^p\leq 2^{p-1}(|a|^p + |b|^p)$ and upper bounds of $|\nabla \xi|$ in $A^r$ and $A^R$ and $|\xi|\leq 1$, it follows that
            \begin{multline}\label{eq:thm_3_3}
                \int_{0}^{t}\int_{A_{\sigma^{\prime}}}\tau^{\frac{p}{2}}|\nabla \tilde{\xi}|^p dx d\tau
                \leq
                C t^{\frac{p}{2}+1}
                \left\{ 
                    \int_{A_{\sigma^{\prime}}}|x|^{\left(\frac{1}{p-1}-p\right)q} |\nabla \xi|^p
                    +
                    |x|^{\left(\frac{1}{p-1}-p\right)q-p}dx
                \right\}\\
                \leq
                C t^{\frac{p}{2}+1}\left(\sigma^{\prime}-\sigma\right)^{-p}
                \left(
                    r^{-p}\int_{A^r}|x|^{\left(\frac{1}{p-1}-p\right)q}dx
                    +
                    R^{-p}\int_{A^R}|x|^{\left(\frac{1}{p-1}-p\right)q}dx
                \right)\\
                +
                C t^{\frac{p}{2}+1}\int_{A_{\sigma^{\prime}}} |x|^{\left(\frac{1}{p-1}-p\right)q-p}dx\\
                \leq
                C t^{\frac{p}{2}+1}
                \left(\sigma^{\prime}-\sigma\right)^{-p}\left(r^{N+\left(\frac{1}{p-1}-p\right)q - p} + R^{N+\left(\frac{1}{p-1}-p\right)q - p}\right)\\
                +
                C t^{\frac{p}{2}+1} \max\left\{r^{N+\left(\frac{1}{p-1}-p\right)q - p}, R^{N+\left(\frac{1}{p-1}-p\right)q - p} \right\}.
            \end{multline}
            By the fact that $N+(1/(p-1)-p)q-p\geq 0$ when $N\geq 2$ and $r\leq R$ and $1\leq (\sigma^{\prime}-\sigma)^{-p}$, it follows from \eqref{eq:thm_3_3} that
            \begin{equation}\label{eq:thm_3_4}
                \int_{0}^{t}\int_{A_{\sigma^{\prime}}} \tau^{\frac{p}{2}} |\nabla \tilde{\xi}|^p dx d\tau
                \leq
                C t^{\frac{p}{2}+1}\left(\sigma^{\prime}-\sigma\right)^{-p}R^{N+\left(\frac{1}{p-1}-p\right)q - p}.
            \end{equation}

            As for the second term on the right hand side of \eqref{eq:thm_3_2}, note that $|\tilde{\xi}|^p\leq |x|^{(1/(p-1)-p)q}$, which by H\"older's inequality yields
            \begin{multline}\label{eq:thm_3_5}
                \int_{A_{\sigma^{\prime}}} t^{\frac{p}{2}} u_{\varepsilon}^{2-p}\tilde{\xi}^p dx
                \leq
                t^{\frac{p}{2}}\left(
                    \int_{A_{\sigma^{\prime}}} |x|^{\left(\frac{1}{p-1}-p\right)q -(2-p)q} u_{\varepsilon}^{2-p}|x|^{(2-p)q} dx
                \right)\\
                \leq
                t^{\frac{p}{2}}
                \left(
                    \int_{A_{\sigma^{\prime}}}|x|^{\left(\frac{1}{p-1}-2\right)\frac{q}{p-1}} dx
                \right)^{p-1}
                \left(
                    \int_{A_{\sigma^{\prime}}} |x|^q u_{\varepsilon} dx
                \right)^{2-p}\\
                \leq
                C t^{\frac{p}{2}} \max\left\{r^{N+\frac{p(3-2p)}{(p-1)^3}}, R^{N+\frac{p(3-2p)}{(p-1)^3}} \right\}^{p-1}\left( \sup_{0<\tau\leq t}\int_{A_{\sigma^{\prime}}}|x|^q u_{\varepsilon}dx\right)^{2-p}.
            \end{multline}
            Note that if $p\leq 3/2$, it is obvious that $N+p(3-2p)/(p-1)^3 \geq 0$.
            If $p>3/2$, since $N\geq 2$, then it holds that
            \begin{equation*}
                N(p-1)^3 + p(3-2p)
                \geq
                \frac{2}{2}(p-1)^2 + p(3-2p)
                =
                -p^2+p+1
                \geq
                0.
            \end{equation*}
            Taking $\varepsilon\searrow 0$ on the right hand side of \eqref{eq:thm_3_5}, yields
            \begin{equation}\label{eq:thm_3_6}
                \int_{A_{\sigma^{\prime}}}t^{\frac{p}{2}}u_{\varepsilon}^{2-p}\tilde{\xi}^p dx
                \leq
                C t^{\frac{p}{2}} R^{\left(N+\frac{p(3-2p)}{(p-1)^3}\right)(p-1)}M_q(\sigma^{\prime})^{2-p}.
            \end{equation}
            Plugging \eqref{eq:thm_3_4} and \eqref{eq:thm_3_6} into \eqref{eq:thm_3_2} and applying inequality $(a+b)^{1/q}\leq a^{1/q} + b^{1/q}$ yields
            \begin{multline}\label{eq:thm_3_7}
                \left(
                    \int_{0}^{t}\int_{A_{\sigma^{\prime}}} \tau^{\beta q}|\nabla u|^{p} u_{\varepsilon}^{-\alpha q}\xi^p |x|^{\left(\frac{1}{p-1}-p\right)q} dx d\tau
                \right)^{\frac{1}{q}}\\
                \leq
                C t^{\frac{p+2}{2q}}\left(\sigma^{\prime}-\sigma\right)^{1-p} R^{\frac{N}{q}+\frac{1}{p-1}-p-\frac{p}{q}}
                +
                C t^{\frac{p}{2q}} R^{\left(N + \frac{p(3-2p)}{(p-1)^3}\right)\frac{(p-1)}{q}} M_q(\sigma^{\prime})^{\frac{2-p}{q}}.
            \end{multline}

            As for the second term on the right hand side of \eqref{eq:thm_3_1}, H\"older's inequality and then letting $\varepsilon \searrow 0$, yields
            \begin{multline}\label{eq:thm_3_8}
                \left(
                    \int_{0}^{t}\int_{A_{\sigma^{\prime}}}\tau^{-\frac{p(p-1)}{2}} |x|^{p^2} u_{\varepsilon}^{p(p-1)} dx d\tau
                \right)^{\frac{1}{p}}\\
                \leq
                \left\{
                    \left(\int_{0}^{t}\int_{A_{\sigma^{\prime}}} dxd\tau \right)^{1-p^2+p}
                    \left( \int_{0}^{t}\int_{A_{\sigma^{\prime}}} \tau^{-\frac{1}{2}}|x|^{q} u_{\varepsilon} dx dx \right)^{p^2-p}
                \right\}^{\frac{1}{p}}\\
                \leq
                C t^{\frac{1}{p}-\frac{p-1}{2}} R^{\frac{N(-p^2+p+1)}{p}} M_q(\sigma^{\prime})^{p-1}.
            \end{multline}
            Plugging \eqref{eq:thm_3_7} and \eqref{eq:thm_3_8} into \eqref{eq:thm_3_1}, it follows that
            \begin{multline}\label{eq:thm_3_9}
                \frac{C}{\sigma^{\prime}-\sigma}\int_{0}^{t}\int_{A_{\sigma^{\prime}}}|\nabla u|^{p-1}\xi^{p-1}|x|^{\frac{1}{p-1}} dx d\tau
                \leq
                C \left(\sigma^{\prime}-\sigma\right)^{-p} t R^{N(2-p)-\frac{p(2p-3)}{p-1}}M_q(\sigma^{\prime})^{p-1}\\
                +
                C \left(\sigma^{\prime}-\sigma\right)^{-1}t^{\frac{1}{p}} R^{\frac{N(2-p)}{p}+\frac{3-2p}{p-1}} M_q(\sigma^{\prime})^{\frac{2(p-1)}{p}}.
            \end{multline}
            Applying Young's inequality $ab\leq \varepsilon b^{1/(p-1)} + C_{\varepsilon}a^{1/(2-p)}$ as well as $ab\leq \varepsilon b^{p/(2p-2)}+ C_{\varepsilon}a^{p/(2-p)}$ to the first and second terms on the right hand side of \eqref{eq:thm_3_9} respectively, where $\varepsilon=1/4$, it follows that
            \begin{multline}\label{eq:thm_case_2}
                \frac{C}{\sigma^{\prime}-\sigma}\int_{0}^{t}\int_{A_{\sigma^{\prime}}}|\nabla u|^{p-1}\xi^{p-1}|x|^{\frac{1}{p-1}} dx d\tau
                \leq
                \frac{1}{2}\sup_{0<\tau\leq t}\int_{A_{\sigma^{\prime}}}|x|^q u dx\\
                +
                C \left(\sigma^{\prime}-\sigma\right)^{\frac{p}{2-p}}t^{\frac{1}{2-p}}R^{-\frac{p(N)}{(p-1)(2-p)}}.
            \end{multline}
    \end{enumerate}   

    Together with \eqref{eq:thm_case_1} and \eqref{eq:thm_case_2}, we deduce from \eqref{eq:thm_1_3} that
    \begin{multline}\label{eq:thm_4_1}
        \int_{A_{\sigma}}|x|^q u(x,t)dx
        \leq
        \int_{A_{\sigma^{\prime}}}|x|^q u_0 dx
        +
        \frac{1}{2}\sup_{0<\tau\leq t}\int_{A_{\sigma^{\prime}}}|x|^q u dx\\
        +
        C \left(\sigma^{\prime}-\sigma\right)^{-\frac{p}{2-p}}t^{\frac{1}{2-p}}\left( r^{-\frac{p(N)}{(p-1)(2-p)}}+R^{-\frac{p(N)}{(p-1)(2-p)}} \right).
    \end{multline}
    Inequality \eqref{eq:thm_4_1} holds by replacing $t$ by any $\tau$ in $(0,t]$ implying that
    \begin{multline}
        \sup_{0<\tau\leq t}\int_{A_{\sigma}}|x|^q u dx
        \leq
        \int_{A_{\sigma^{\prime}}}|x|^q u_0 dx + \frac{1}{2}\sup_{0<\tau\leq t}\int_{A_{\sigma^{\prime}}}|x|^q u dx\\
        +
        C \left(\sigma^{\prime}-\sigma\right)^{-\frac{p}{2-p}}t^{\frac{1}{2-p}}\left( r^{-\frac{p(N)}{(p-1)(2-p)}}+R^{-\frac{p(N)}{(p-1)(2-p)}} \right).
    \end{multline} 
    By Lemma \ref{lemma:Junning_Lemma}, it holds that
    \begin{equation*}
        \sup_{\sigma\in [1,2]}M_q(\sigma)
        \leq
        (2R)^q \sup_{0<\tau\leq t}\int_{B_{2R}}u dx
        \leq
        C (2R)^q\left(
            \int_{B_{2R}}u_0dx + R^{-\frac{k}{2-p}}t^{\frac{1}{2-p}}
        \right)
        <\infty.
    \end{equation*}
    Hence $M_q(\sigma)$ has bounded value on $\sigma\in [1,2]$.
    Applying Lemma \ref{lemma:Mariano_lemma} for $\lambda=1$ and $\lambda^\prime=2$, it follows that
    \begin{multline}\label{eq:thm_result_1}
        \sup_{0<\tau\leq t}\int_{r \leq |x|\leq R}|x|^q u dx
        \leq
        C\int_{\frac{1}{2}r \leq |x|\leq 2R}|x|^q u_0 dx\\
        +
        C t^{\frac{1}{2-p}}\left(
            r^{-\frac{p(N)}{(p-1)(2-p)}} + R^{-\frac{p(N)}{(p-1)(2-p)}},
        \right)
    \end{multline}
    for some $C:=C(N,p)$.
    Thus we prove inequality \eqref{eq:thm_result_ineq} for the case where the initial data $u_0$ is in $C^{\infty}_c(\mathbb{R}^N)$.

    Let us finally address the inequality \eqref{eq:thm_result_ineq} in the general case where $\mu\in \mathcal{M}^+$ is taken as initial data.
    Let $m\in C^{\infty}_c(\mathbb{R}^N)$ be a mollifier function with $\mathrm{supp}(m)\subset B_1, m\geq 0$ and $\int_{\mathbb{R}^N}m dx=1$ and $m_n(x)= n^N m(nx)$.
    Let $\xi_n\in C^{\infty}_c(\mathbb{R}^N)$ be a smooth cut-off function such that $\xi_n=1$ in $B_n$ and $\xi=0$ in $B^c_{2n}$.
    Let $u_{0n}\in C^{\infty}_c(\mathbb{R}^N)$ be defined as follows:
    \begin{equation*}
        u_{0n}(x)
        =
        \xi_n(x)\left(m_n * \mu\right)(x)
        =
        \xi_n(x)\int_{\mathbb{R}^N} m_n(x-y)d\mu(y).
    \end{equation*}
    Obviously, $u_{0n}\geq 0$. 
    Moreover, for any $\rho>0$, since $\mathrm{supp}(m_n(x-\cdot))\subset B_{1/n}(x)$, it follows that  
    \begin{equation}\label{eq:thm_1_initial_data_cond_1}
        \int_{B_{\rho}}u_{0n}dx
        \leq
        \int_{y\in B_{\rho+1/n}}\int_{B_{1/n}(y)\cap B_{\rho}}m_n(x-y)dx d\mu(y)
        \leq
        \mu(B_{\rho+1}).
    \end{equation}
    Furthermore, for any $\phi\in C^{\infty}_c(\mathbb{R}^N)$, take $n$ large enough such that $\mathrm{supp}(\phi)_1\subset B_{n}$, where $\mathrm{supp}(\phi)_1:=\{x\colon d(x,\mathrm{supp}(\phi))\leq 1 \}$.
    Then it follows that
    \begin{equation*}
        \int_{\mathbb{R}^N}\phi u_{0n}dx
        =
        \int_{\mathrm{supp}(\phi)_1}\int_{B_{1/n}(y)}\xi_n(x)\phi(x)m_n(x-y)dx d\mu(y)
        =
        \int_{\mathrm{supp}(\phi)_1}(\phi * \tilde{m}_n)(y) d\mu(y),
    \end{equation*}
    where $\tilde{m}_n(x)=m_n(-x)$.
    By the uniform convergence of $\phi * \tilde{m}_n$ to $\phi$ on compact sets, see \citep[Proposition 4.21]{brezis_functional}, it follows that
    \begin{equation}\label{eq:thm_1_initial_data_cond_2}
        \lim_{n\rightarrow \infty}\int_{\mathbb{R}^N}\phi u_{0n}dx
        =
        \int_{\mathbb{R}^N}\phi d\mu.
    \end{equation}
    Let $u_n$ be the unique positive weak solution of \eqref{eq:p_laplace} and \eqref{eq:initial_data_1} with initial data $u_{0n}$ for $n\geq 1$.
    Then by inequality \eqref{eq:Junning_2} of Lemma \ref{lemma:Junning_Lemma} and \eqref{eq:thm_1_initial_data_cond_1}, it follows that $(u_n)_{n}$ is locally equibounded in $S_T$, that is, for any $\varepsilon>0$ and bounded domain $\Omega\subset \mathbb{R}^N$, $(u_n)_n$ is equibounded in $\Omega_{\varepsilon}:=\Omega \times [\varepsilon,T]$.  
    Then by \citep[Theorem 1]{chen1988}, it follows that $(u_n)_n$ is uniformly equicontinuous in $\Omega_{\varepsilon}$ for all $\varepsilon>0$ and all bounded domains $\Omega\subset \mathbb{R}^N$.
    By diagonalization procedure, we can find a subsequence of $(u_n)$, which is relabelled by $n$, such that
    \begin{equation}\label{eq:thm_approximation}
        u_n,\nabla u_n \rightarrow u, \nabla u \quad \text{uniformly on every compact subset of } S_T,
    \end{equation}
    and $u$ is a positive weak solution of \eqref{eq:p_laplace} and \eqref{eq:initial_data_2} with initial data $\mu$, see \citep[Theorem III.8.1]{dibenedetto1990}.

    Now for any $R>r>0$ and $0<t\leq T$, by inequality \eqref{eq:thm_result_1}, it follows that for any $\tau\in (0,t]$,
    \begin{multline}\label{eq:thm_proof_1}
        \int_{r\leq |x|\leq R}|x|^q u_n(x,\tau)dx
        \leq
        C \int_{\frac{1}{2}r\leq |x|\leq 2 R}|x|^q u_{0n}dx\\
        +
        Ct^{\frac{1}{2-p}}\left(
            r^{-\frac{p(N)}{(p-1)(2-p)}} + R^{-\frac{p(N)}{(p-1)(2-p)}}
        \right).
    \end{multline}
    For the left hand side of \eqref{eq:thm_proof_1}, by dominated convergence and \eqref{eq:thm_approximation}, it follows that
    \begin{equation}\label{eq:thm_proof_2}
        \lim_{n\rightarrow \infty}\int_{r\leq |x|\leq R}|x|^q u_n(x,\tau)dx
        =
        \int_{r\leq |x|\leq R}|x|^q u(x,\tau)dx.
    \end{equation}
    For the first term on the right hand side of \eqref{eq:thm_proof_1}, let $\varepsilon>0$ and $\phi\in C_c(\mathbb{R}^N)$ such that $\phi=|x|^q$ on $\{\frac{1}{2}r\leq |x|\leq 2R\}$ and $\phi=0$ on $\{\frac{1}{2}r-\varepsilon\leq |x|\leq 2R+\varepsilon\}^c$.
    Then taking $n\rightarrow \infty$ and by \eqref{eq:thm_1_initial_data_cond_2}, it follows that
    \begin{equation}\label{eq:thm_proof_3}
        \limsup_{n\rightarrow \infty}\int_{\frac{1}{2}r\leq |x|\leq 2R}|x|^q u_{0n}dx
        \leq
        \int_{\mathbb{R}^N}\phi d\mu
        \leq
        \int_{\frac{1}{2}r-\varepsilon\leq |x|\leq 2R+\varepsilon}|x|^q d\mu.
    \end{equation}
    For $\varepsilon\searrow 0$ together with \eqref{eq:thm_proof_2} and \eqref{eq:thm_proof_3}, it follows that for any $\tau\in (0,t]$,
    \begin{multline*}
        \int_{r\leq |x|\leq R}|x|^q u(x,\tau)dx
        \leq
        C \int_{\frac{1}{2}r\leq |x|\leq 2 R}|x|^q d\mu\\
        +
        Ct^{\frac{1}{2-p}}\left(
            r^{-\frac{p(N)}{(p-1)(2-p)}} + R^{-\frac{p(N)}{(p-1)(2-p)}}
        \right).
    \end{multline*}
    Taking the supremum over $\tau\in (0,t]$ on the left hand side yields the desired result.

    We are left to show uniform boundedness of $q$-moment and inequality \eqref{eq:q_moment_uniformly_integrable}.  
    Assume that $\mu$ is in $\mathcal{M}^+$ with finite total mass and finite $q$-moment, and $p\in(p_c,2)$ is such that $p(N)>0$.
    By inequality \eqref{eq:Junning_1} in Lemma \ref{lemma:Junning_Lemma} and similar arguments, see also \citep[Theorem 1]{zhao1995}, it follows that for any $t\in (0,T]$ and $r>0$,
    \begin{equation}\label{eq:thm_proof_4}
        \sup_{0<\tau\leq t}\int_{B_r}u(x,\tau)dx
        \leq
        C\int_{B_{2r}}d\mu + Cr^{-\frac{k}{2-p}}t^{\frac{1}{2-p}}.
    \end{equation}
    Taking $r=1$ and together with \eqref{eq:thm_result_ineq}, it follows that for any $R>1$
    \begin{multline*}
        \sup_{0<\tau\leq t}\int_{B_R}|x|^q u(x,\tau)dx
        \leq
        \sup_{0<\tau\leq t}\int_{|x|<1}|x|^q u(x,\tau)dx
        +
        \sup_{0<\tau\leq t}\int_{1\leq |x|\leq R}|x|^q u(x,\tau)dx\\
        \leq
        C\int_{\frac{1}{2}\leq |x|\leq 2R}|x|^qd\mu
        +
        C\int_{|x|\leq 2}d\mu
        +
        Ct^{\frac{1}{2-p}}\left(
            1 + R^{-\frac{p(N)}{(p-1)(2-p)}}
        \right).
    \end{multline*}
    Since $p(N)>0$, taking $R\rightarrow \infty$ and it follows that
    \begin{equation*}
        \sup_{0<\tau\leq t}\int_{\mathbb{R}^N}|x|^q u(x,\tau)dx
        \leq
        C \int_{|x|\geq \frac{1}{2}}|x|^qd\mu + C\int_{|x|\leq 2}d\mu
        +
        Ct^{\frac{1}{2-p}}.
    \end{equation*}
    From this we obtain that $\{\mu_{\tau}=u(x,\tau)dx\colon \tau\in [0,t]\}$ has uniformly bounded $q$-moment for any $t>0$. 
    For inequality \eqref{eq:q_moment_uniformly_integrable}, taking $R\rightarrow 0$ for both side of \eqref{eq:thm_result_ineq} and it follows that
    \begin{equation*}
        \sup_{0<\tau\leq t}\int_{|x|\geq r}|x|^q u(x,\tau)dx
        \leq
        C \int_{|x|\geq \frac{1}{2}r}|x|^q d\mu
        +
        C t^{\frac{1}{2-p}}r^{-\frac{p(N)}{(p-1)(2-p)}}.
    \end{equation*}
    Since $\mu$ has finite $q$-moment, taking $r\rightarrow \infty$ and it follows that
    \begin{equation*}
        \lim_{r\rightarrow \infty}\sup_{0<\tau\leq t}\int_{|x|\geq r}|x|^q u(x,\tau)dx
        =0.
    \end{equation*}
\end{proof}

\section{Mass conservation and weak convergence}
In \citep{fino2014}, the authors show that mass conservation of any positive weak solution of \eqref{eq:p_laplace} and \eqref{eq:initial_data_2} with initial data $\mu$ in $\mathcal{M}^{+}_f$ with compact support. 
In this section, we will show that the weak solution constructed in Theorem \ref{theorem:moment_estimate} with initial data $\mu$ in $\mathcal{M}^+$ with finite total mass and finite $q$-moment, preserves mass.
As a by-product, measure $\mu_t:=u(x,t)dx$ converges to $\mu$ weakly.

\begin{theorem}[Mass Conservation]\label{thm:mass_conserv}
    Let $p\in (p_c ,2)$ and $\mu$ be a positive finite Radon measure in $\mathbb{R}^N$ and $u$ is the positive weak solution in Theorem \ref{theorem:moment_estimate}.
    If $\mu$ has $q$-finite moment, then $u$ preserves mass, that is,
    \begin{equation}
        \int_{\mathbb{R}^N}u(x,t)dx
        =
        \int_{\mathbb{R}^N}d\mu, \quad \text{for all}\quad 0<t\leq T.
    \end{equation}
\end{theorem}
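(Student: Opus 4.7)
The plan is to reduce to the smooth approximations $u_n$ used in the proof of Theorem \ref{theorem:moment_estimate}, run a cut-off argument at that level, and then pass to the limit. Fix $R>0$ and take a smooth radial cut-off $\xi_R$ with $\xi_R\equiv 1$ on $B_R$, $\mathrm{supp}(\xi_R)\subset B_{2R}$ and $|\nabla\xi_R|\leq C/R$ supported in the annulus $A_R^{2R}$. Using $\xi_R$ as a time-independent test function in \eqref{eq:test_condtion_2} for $u_n$ yields
\begin{equation*}
    \int_{\mathbb{R}^N} u_n(x,t)\xi_R\,dx-\int_{\mathbb{R}^N} u_{0n}\xi_R\,dx
    =-\int_0^t\!\!\int_{A_R^{2R}}|\nabla u_n|^{p-2}\nabla u_n\cdot\nabla\xi_R\,dx\,d\tau=:-E_{n,R}(t),
\end{equation*}
and the whole argument rests on showing that $|E_{n,R}(t)|\to 0$ as $R\to\infty$, uniformly in $n$.

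To bound $|E_{n,R}(t)|\leq(C/R)\int_0^t\int_{A_R^{2R}}|\nabla u_n|^{p-1}\,dx\,d\tau$, I would replicate the H\"older splitting from the proof of Theorem \ref{theorem:moment_estimate} (Case 1 if $p>1/(p-1)$, Case 2 otherwise), simply dropping the weight $|x|^q$ from the test function. This produces the product of $(\int\tau^{\beta q}|\nabla u_n|^p(u_n+\varepsilon)^{-\alpha q}\,dx\,d\tau)^{1/q}$ and $(\int\tau^{-\beta p}(u_n+\varepsilon)^{\alpha p}\,dx\,d\tau)^{1/p}$ over $A_R^{2R}$. The first factor is controlled by Lemma \ref{lemma:aux_lemma_1} applied with an auxiliary cut-off equal to $1$ on $A_R^{2R}$ and supported in $A_{R/2}^{4R}$, after which further H\"older inequalities in the spirit of \eqref{eq:thm_1_6}--\eqref{eq:thm_1_7_1} convert the arising sub-linear powers of $u_n$ into negative powers of $R$ times powers of $\sup_{0<\tau\leq t}\int_{|x|\geq R/2}|x|^q u_n\,dx$. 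The net outcome is an estimate $|E_{n,R}(t)|\leq\omega_n(R)$ in which every $R$-dependence is decaying.

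The uniformity in $n$ comes from two ingredients. First, a direct computation with $u_{0n}=\xi_n(m_n*\mu)$ and the triangle inequality for $|x|^q$ shows that $\sup_n\int|x|^q u_{0n}\,dx\leq C\int|x|^q\,d\mu+C\mu(\mathbb{R}^N)<\infty$ and, more importantly, that $\sup_n\int_{|x|\geq r}|x|^q u_{0n}\,dx\to 0$ as $r\to\infty$, since $\mu$ has finite mass and finite $q$-moment. Second, inequality \eqref{eq:thm_result_ineq} applied to each $u_n$ with initial datum $u_{0n}$ (the form in which it was first established in the proof of Theorem \ref{theorem:moment_estimate}) transfers this tail smallness to the family: $\sup_n\sup_{0<\tau\leq t}\int_{|x|\geq r}|x|^q u_n(x,\tau)\,dx\to 0$ as $r\to\infty$. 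Plugging this into the H\"older estimate of the previous paragraph gives $|E_{n,R}(t)|\leq\omega(R)$ with $\omega(R)\to 0$ as $R\to\infty$, independently of $n$.

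Once the uniform error bound is in hand, the conclusion is routine. For fixed $R$, letting $n\to\infty$ and using the locally uniform convergence \eqref{eq:thm_approximation} together with the weak approximation \eqref{eq:thm_1_initial_data_cond_2} of $\mu$ yields
\begin{equation*}
    \Bigl|\int_{\mathbb{R}^N} u(x,t)\xi_R\,dx-\int_{\mathbb{R}^N}\xi_R\,d\mu\Bigr|\leq\omega(R).
\end{equation*}
Sending $R\to\infty$, monotone convergence delivers $\int\xi_R\,d\mu\to\mu(\mathbb{R}^N)$ on the right and simultaneously forces $\int_{\mathbb{R}^N} u(x,t)\xi_R\,dx$ to converge to $\int_{\mathbb{R}^N} u(x,t)\,dx$ (which the inequality in particular shows to be finite), giving the desired equality. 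The main obstacle is precisely the uniform-in-$n$ decay of $E_{n,R}(t)$: because $p-1<1$ no direct energy bound on $|\nabla u_n|^{p-1}$ is available, and it is the finite $q$-moment of $\mu$, rather than finite mass alone, that, via Theorem \ref{theorem:moment_estimate}, makes the tails of $u_n$ on $\{|x|\geq R\}$ uniformly small in $n$.
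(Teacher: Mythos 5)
Your overall strategy—test with a cut-off $\xi_R$, bound the error $E_{n,R}(t)$, send $n\to\infty$ then $R\to\infty$—is the same as the paper's, and the algebraic ingredients you point to (H\"older splitting, Lemma~\ref{lemma:aux_lemma_1}, the $q$-moment estimate on $u_n$) are the right ones. But the way you propose to make $E_{n,R}$ small is the wrong one for the claimed range $p\in(p_c,2)$, and this is a genuine gap rather than a presentation choice.

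Concretely, you reduce everything to the claim that $\sup_n\sup_{0<\tau\leq t}\int_{|x|\geq r}|x|^q u_n(x,\tau)\,dx\to 0$ as $r\to\infty$, and you say you would deduce this from \eqref{eq:thm_result_ineq} applied to $u_n$. To extract a tail bound from \eqref{eq:thm_result_ineq} one must let the outer radius go to infinity, and the term $C\,t^{1/(2-p)}R^{-p(N)/((p-1)(2-p))}$ tends to $0$ only if $p(N)>0$, i.e.\ only for $p>p_N$; this is exactly why the uniform-integrability part of Theorem~\ref{theorem:moment_estimate} (equation~\eqref{eq:q_moment_uniformly_integrable}) is stated with the hypothesis $p(N)>0$. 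For $p_c<p\le p_N$ the tail $\int_{|x|\ge r}|x|^q u_n(x,\tau)\,dx$ may in fact be infinite (this is what happens for the Barenblatt profile), so the quantity you want to make uniformly small is not even available. Your proof therefore covers only $(p_N,2)$, not the range $(p_c,2)$ asserted by the theorem.

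The paper sidesteps this by never leaving compact annuli. Lemma~\ref{lemma:ge_annulus} bounds $\int_0^t\int_{\rho\le|x|\le 2\rho}|\nabla u_n|^{p-1}$ in terms of $\int_{\rho/4\le|x|\le 8\rho}|x|^q u_{0n}\,dx$ (a finite-annulus quantity, always bounded uniformly in $n$ by $\int|x|^q d\mu+\mu(\mathbb{R}^N)$) together with explicit powers of $\rho$. The decisive point is that, after multiplying by the extra $\rho^{-1}$ from $|\nabla\xi_R|$, the combined exponents
\begin{equation*}
    -1-\tfrac{1}{p-1}-\tfrac{p(N)}{p(p-1)}=-\tfrac{(N+3)p-2N}{p},\qquad
    -1-\tfrac{1}{p-1}-\tfrac{2p(N)}{p(2-p)}-\tfrac{p(N)}{p(p-1)}=-\tfrac{(N+1)p-2N}{2-p}
\end{equation*}
are strictly negative for every $p\in(p_c,2)$, regardless of the sign of $p(N)$. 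The decay of $E_{n,\rho}$ comes from the factors $\rho^{-1}$ (cut-off gradient) and $\rho^{-1/(p-1)}$ (dividing out $|x|^{1/(p-1)}$ on the annulus, cf.\ \eqref{eq:ge_annulus_1}), which dominate even a growing annular $q$-moment; no tail smallness of $u_n$ is needed, only boundedness of the annular $q$-moment of $u_{0n}$. To repair your argument you would need to drop the appeal to \eqref{eq:q_moment_uniformly_integrable} entirely, carry out the exponent bookkeeping on compact annuli, and verify the above cancellation.
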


\begin{corollary}[Weak Convergence]\label{corollary:weak_convergence}
    Let $\mu$ be a positive finite Radon measure in $\mathbb{R}^N$ with finite $q$-moment and $u$ be the positive weak solution in Theorem \ref{theorem:moment_estimate}.
    Then the measure $\mu_t:=u(x,t)dx$ converges weakly to the initial data $\mu$ as $t$ goes to $0$.
\end{corollary}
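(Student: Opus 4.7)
The plan is to upgrade the vague convergence that is built into the definition of a weak solution with $\mu \in \mathcal{M}^+$ as initial data to weak convergence against bounded continuous test functions. The key additional input is a uniform tightness estimate for the family $\{\mu_t\}_{0 < t \leq T}$, which follows immediately from the uniform $q$-moment bound of Theorem \ref{theorem:moment_estimate} via Markov's inequality: for each $R > 0$,
$$\sup_{0 < t \leq T}\mu_t(\{|x|>R\}) \;\leq\; R^{-q}\sup_{0 < t \leq T}\int_{|x|>R}|x|^q u(x,t)\,dx \;\leq\; C\,R^{-q},$$
where $C$ depends only on $\mu$, $N$, $p$, $T$. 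The analogous tail bound for $\mu$ itself is immediate from the hypothesis that $\mu$ has finite $q$-moment.

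Given $\phi \in C_b(\mathbb{R}^N)$ and $\varepsilon > 0$, the next step is to split $\phi = \phi\chi_R + \phi(1-\chi_R)$ using a cutoff $\chi_R \in C_c(\mathbb{R}^N)$ with $0 \leq \chi_R \leq 1$ and $\chi_R \equiv 1$ on $B_R$. Choosing $R$ large enough, the tail contributions $\int \phi(1-\chi_R)\,d\mu_t$ and $\int \phi(1-\chi_R)\,d\mu$ are each bounded by $\|\phi\|_\infty \cdot C R^{-q} < \varepsilon/2$, uniformly in $t \in (0,T]$. The truncated part $\phi\chi_R$ lies in $C_c(\mathbb{R}^N)$, so the vague convergence condition in the definition of the weak solution (applied to $\psi = \phi\chi_R$) gives $\int \phi\chi_R\, d\mu_t \to \int \phi\chi_R\, d\mu$ as $t \searrow 0$. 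Taking $\limsup_{t \searrow 0}$ and then letting $\varepsilon \searrow 0$ yields the desired convergence.

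No genuine obstacle is expected: the corollary is the standard ``vague $+$ tightness $\Rightarrow$ weak'' principle for positive finite Radon measures, and all the substantive analytic work is already contained in Theorem \ref{theorem:moment_estimate}, which supplies precisely the uniform tail control needed. The mass conservation established in Theorem \ref{thm:mass_conserv} is morally the instance $\phi \equiv 1$ of the statement and serves as a consistency check, but it is not strictly required in the above argument for general $\phi \in C_b(\mathbb{R}^N)$.
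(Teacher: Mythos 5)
Your template --- vague convergence plus tightness of $\{\mu_t\}$ implies weak convergence --- is a correct general principle, and the Markov bound is fine as stated. The gap is in the tightness input. The uniform $q$-moment bound and the tail estimate \eqref{eq:q_moment_uniformly_integrable} in Theorem \ref{theorem:moment_estimate} are established only under the \emph{additional} hypothesis $p(N)>0$, i.e.\ $p\in(p_N,2)$. For $p_c<p\le p_N$ the annulus estimate \eqref{eq:thm_result_ineq} still holds, but the term $R^{-p(N)/[(p-1)(2-p)]}$ no longer tends to $0$ as $R\to\infty$, so passing to the limit does \emph{not} yield a finite global $q$-moment, and your bound
$\sup_{0<t\le T}\int_{|x|>R}|x|^q u(x,t)\,dx\le C$
has no justification in that sub-range. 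Consequently your argument only proves the corollary for $p\in(p_N,2)$, whereas the paper claims (and proves) weak convergence for the full range $p_c<p<2$.

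The ingredient you set aside as a ``consistency check'' is exactly what closes this gap. The paper's proof combines the built-in vague convergence with Theorem \ref{thm:mass_conserv}, namely $\mu_t(\mathbb{R}^N)=\mu(\mathbb{R}^N)$ for all $0<t\le T$. Vague convergence of positive finite Radon measures together with equality (or even just convergence) of total masses already implies weak convergence --- this is the standard portmanteau-type result the paper cites. Crucially, the mass-conservation argument goes through the gradient estimate of Lemma \ref{lemma:ge_annulus} and works for all $p\in(p_c,2)$ with finite $q$-moment initial data; it does not require $p(N)>0$. So to recover the stated parameter range you should replace your tightness-via-$q$-moment step by an appeal to Theorem \ref{thm:mass_conserv}, or else restrict your claim to $p\in(p_N,2)$.
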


We start with a lemma providing gradient estimate of the weak solution of \eqref{eq:p_laplace} with respect to $q$-moment of smooth and compact-supported initial data on the annulus.
\begin{lemma}\label{lemma:ge_annulus}
    Let $u$ be the positive weak solution of \eqref{eq:p_laplace} with initial data $u_0$ in $C^{\infty}_c(\mathbb{R}^N)$ and $u_0\geq 0$.
    Then there exists a positive constant $C:=C(N,p)$ such that for any $0<t\leq T$ and $R>r>0$, it holds that
    \begin{multline}
        \int_{0}^{t}\int_{r\leq |x|\leq R}|\nabla u|^{p-1}dxd\tau
        \\
        \leq
        C t^{\frac{1}{p}}r^{-\frac{1}{p-1}}\left(
            r^{-\frac{p(N)}{p(p-1)}}
            +
            R^{-\frac{p(N)}{p(p-1)}}
        \right)
        \left( M_q\left(u_0,A_{r/4}^{4R}\right)\right)^{\frac{2(p-1)}{p}}\\
        +
        C t^{\frac{1}{2-p}} r^{-\frac{1}{p-1}}\left(
            r^{-\frac{2p(N)}{p(2-p)}} + R^{-\frac{2p(N)}{p(2-p)}}
        \right)
        \left(
            r^{-\frac{p(N)}{p(p-1)}} + R^{-\frac{p(N)}{p(p-1)}}
        \right),
    \end{multline}
    where $M_q(u_0, A):=\int_{A}|x|^q u_0 dx$.
\end{lemma}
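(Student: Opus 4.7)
The plan is to piggy-back on the internal estimates of the proof of Theorem \ref{theorem:moment_estimate}, reducing the task to bounding the same integral that was already handled there. On the annulus $A_r^R = \{r \leq |x| \leq R\}$ we have $|x| \geq r$, so $1 \leq r^{-1/(p-1)} |x|^{1/(p-1)}$, which yields
$$\int_0^t \int_{A_r^R} |\nabla u|^{p-1} dx d\tau \leq r^{-1/(p-1)} \int_0^t \int_{A_r^R} |\nabla u|^{p-1} |x|^{1/(p-1)} dx d\tau.$$
This already extracts the common factor $r^{-1/(p-1)}$ present in both terms of the claimed bound, and reduces the problem to controlling precisely the quantity that was estimated at the heart of the proof of Theorem \ref{theorem:moment_estimate}.

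Introducing a smooth cutoff $\xi \in C_c^\infty(\mathbb{R}^N)$ with $\xi \equiv 1$ on $A_r^R$, $\mathrm{supp}(\xi) \subset A_{r/2}^{2R}$, and $|\nabla \xi| \leq C/r$ respectively $\leq C/R$ on the inner and outer transition layers, I would replay the H\"older--Lemma \ref{lemma:aux_lemma_1}--H\"older chain from the proof of Theorem \ref{theorem:moment_estimate}, splitting into the cases $p>1/(p-1)$ (with $\alpha=1/p$) and $p\leq 1/(p-1)$ (with $\alpha=p-1$) as dictated by Lemma \ref{lemma:aux_lemma_1}. In each case this produces a bound
$$\int_0^t \int_{A_r^R} |\nabla u|^{p-1} |x|^{1/(p-1)} dx d\tau \leq C\, t^{1/p}\bigl(r^{-p(N)/(p(p-1))} + R^{-p(N)/(p(p-1))}\bigr) M_q^{2(p-1)/p},$$
where $M_q := \sup_{0 < \tau \leq t} \int_{A_{r/2}^{2R}} |x|^q u(x,\tau) dx$. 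The exponent $p(N)/(p(p-1))$ arises as the product $\bigl(p(N)/((p-1)(2-p))\bigr)\cdot\bigl((2-p)/p\bigr)$, the extra $(2-p)/p$ factor coming from the final H\"older step that converts $u_\varepsilon^{2-\alpha q}$ into a power of the $q$-moment. Any subsidiary $M_q^{p-1}$ contribution from the other summand in Lemma \ref{lemma:aux_lemma_1} can be controlled via Young's inequality against the $M_q^{2(p-1)/p}$ piece, in the same spirit as the Young-absorption step in the proof of Theorem \ref{theorem:moment_estimate}.

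To close the argument, I would apply Theorem \ref{theorem:moment_estimate} itself on the enlarged annulus $A_{r/2}^{2R}$, which gives
$$M_q \leq C\, M_q\bigl(u_0, A_{r/4}^{4R}\bigr) + C\, t^{1/(2-p)}\bigl(r^{-p(N)/((p-1)(2-p))} + R^{-p(N)/((p-1)(2-p))}\bigr).$$
Since $2(p-1)/p \in (0,1)$ for $p \in (p_c, 2)$, the elementary subadditivity $(a+b)^s \leq a^s + b^s$ yields two contributions when $M_q$ is raised to that power. Multiplying through by the prefactor $t^{1/p}(r^{-p(N)/(p(p-1))} + R^{-p(N)/(p(p-1))})$ and by $r^{-1/(p-1)}$, the $M_q(u_0, A_{r/4}^{4R})^{2(p-1)/p}$ piece matches exactly the first term of the lemma. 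For the pure power piece, combining $t$-exponents yields $1/p + 2(p-1)/(p(2-p)) = 1/(2-p)$, while the $(r,R)$-factors multiply into the product-of-sums $(r^{-2p(N)/(p(2-p))} + R^{-2p(N)/(p(2-p))})(r^{-p(N)/(p(p-1))} + R^{-p(N)/(p(p-1))})$ of the second term. The main obstacle is the careful exponent bookkeeping across the two regimes $p \gtrless 1/(p-1)$ of Lemma \ref{lemma:aux_lemma_1}; once this is settled in parallel with the proof of Theorem \ref{theorem:moment_estimate}, the remainder is a routine assembly.
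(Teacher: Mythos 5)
Your proposal follows the paper's route: extract $r^{-1/(p-1)}$ from $|x|\geq r$, reuse the intermediate inequalities \eqref{eq:thm_1_8} and \eqref{eq:thm_3_9} from the proof of Theorem~\ref{theorem:moment_estimate} on the cutoff annulus with $\sigma=1,\sigma'=2$, absorb the $M_q^{p-1}$ term by Young's inequality, and finally plug in the moment estimate from Theorem~\ref{theorem:moment_estimate} and split via subadditivity. That is exactly the paper's strategy. There is, however, one imprecision worth flagging.

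Your stated intermediate bound
\begin{equation*}
\int_0^t\int_{A_r^R}|\nabla u|^{p-1}|x|^{\frac{1}{p-1}}\,dx\,d\tau
\;\leq\;
C\,t^{\frac1p}\Bigl(r^{-\frac{p(N)}{p(p-1)}}+R^{-\frac{p(N)}{p(p-1)}}\Bigr)M_q^{\frac{2(p-1)}{p}}
\end{equation*}
is not what the Young step actually produces. The two-term Young inequality $ab\leq a^{2/(2-p)}+b^{2/p}$, applied to $t\,r^{-p(N)/(p-1)}M_q^{p-1}$ exactly as in the paper (write $a=t^{1/2}r^{-p(N)/(2(p-1))}$, $b=t^{1/2}r^{-p(N)/(2(p-1))}M_q^{p-1}$), yields the second term above \emph{plus} a pure power term $t^{1/(2-p)}r^{-p(N)/((p-1)(2-p))}$; one cannot absorb the $M_q^{p-1}$ piece into the $M_q^{2(p-1)/p}$ piece without this offset, since $p-1<2(p-1)/p$ makes the former dominate when $M_q$ is small. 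So the correct intermediate bound carries an additional
$C\,t^{\frac{1}{2-p}}\bigl(r^{-\frac{p(N)}{(p-1)(2-p)}}+R^{-\frac{p(N)}{(p-1)(2-p)}}\bigr)=:C\,t^{\frac{1}{2-p}}F_1$.
You then never explain how $F_1$ is converted into the product $F_2F_3$ appearing in the lemma; the paper does this by writing $F_1=(F_1)^{(2-p)/p}(F_1)^{2(p-1)/p}\leq F_2F_3$ using subadditivity of $x\mapsto x^s$ for $s\in(0,1)$. Your assembly step for the ``pure power piece'' would produce a $t^{1/(2-p)}F_2F_3$ contribution anyway, so the final estimate is not endangered, but as written your argument drops a term and needs this supplementary observation to close.
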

\begin{proof}
    Let $R>r>0$ and denote by $A_{\sigma}$ the annulus $A_{r/\sigma}^{R \sigma}$ for $\sigma\geq 1$.
    Let $\xi$ be a smooth cut-off functions such that $\xi=1$ on $A_1$ and $\xi=0$ on $A^c_2$, and $|\nabla \xi|\leq 2r^{-1}$ on $A_{r/2}^{r}$ and $|\nabla \xi|\leq R^{-1}$ on $A_R^{2R}$.
    Since $|x|\geq r$ on $A_1$, it follows that
    \begin{equation}\label{eq:ge_annulus_1}
        \int_{0}^{t}\int_{A_1}|\nabla u|^{p-1}dxd\tau
        \leq
        r^{-\frac{1}{p-1}}\int_{0}^{t}\int_{A_2} |\nabla u|^{p-1}\xi^{p-1}|x|^{\frac{1}{p-1}}dxd\tau.
    \end{equation}
    We consider the right hand side of \eqref{eq:ge_annulus_1} for different cases of $p$:
    \begin{enumerate}[label=\textit{Case \arabic*:}, fullwidth]
        \item If $p>1/(p-1)$, then by inequality \eqref{eq:thm_1_8} in which we choose $\sigma=1$ and $\sigma^{\prime}=2$, it follows that
            \begin{multline}\label{eq:ge_annulus_2}
                \int_{0}^{t}\int_{A_2}|\nabla u|^{p-1}\xi^{p-1}|x|^{\frac{1}{p-1}}dxd\tau
                \leq
                C t \left( r^{1-k-p+\frac{1}{p-1}} + R^{1-k-p+\frac{1}{p-1}}\right)M_q(2)^{p-1}\\
                + 
                C t^{\frac{1}{p}}\left(\max\left\{r^{-\frac{p(N)}{(p-1)(2-p)}}, R^{-\frac{p(N)}{(p-1)(2-p)}}\right\}\right)^{\frac{2-p}{p}} M_q(2)^{\frac{2(p-1)}{p}},
            \end{multline}
            where $M_q(2)=\sup_{0<\tau\leq t}\int_{A_2}|x|^q u dx$.
            Since that $(p-1)\leq \frac{2(p-1)}{p}$, applying Young's inequality $ab\leq a^{\frac{2}{2-p}} + b^{\frac{2}{p}}$ to the first term on the right hand of \eqref{eq:ge_annulus_2}, yields
            \begin{align*}
                t r^{1-k-p+\frac{1}{p-1}} M_q(2)^{p-1} 
                & =
                \left( t^{\frac{1}{2}}r^{1-k-p+\frac{1}{p-1} + \frac{p(N)}{2(p-1)}}\right)
                \left( t^{\frac{1}{2}}r^{-\frac{p(N)}{2(p-1)}}M_q(2)^{p-1}\right)
                \\
                &\leq
                t^{\frac{1}{2-p}}r^{-\frac{p(N)}{(p-1)(2-p)}} 
                + t^{\frac{1}{p}}r^{-\frac{p(N)}{p(p-1)}}M_q(2)^{\frac{2(p-1)}{p}},
            \end{align*}
            which plugged into \eqref{eq:ge_annulus_2} yields
            \begin{multline}\label{eq:ge_annulus_3}
                \int_{0}^{t}\int_{A_2}|\nabla u|^{p-1}\xi^{p-1}|x|^{\frac{1}{p-1}}dxd\tau
                \leq
                C t^{\frac{1}{2-p}}\left(r^{-\frac{p(N)}{(p-1)(2-p)}} + R^{-\frac{p(N)}{(p-1)(2-p)}}\right)\\
                +
                C t^{\frac{1}{p}}\left(r^{-\frac{p(N)}{p(p-1)}} + R^{-\frac{p(N)}{p(p-1)}}\right) M_q(2)^{\frac{2(p-1)}{p}}.
            \end{multline}

        \item If $p\leq 1/(p-1)$, then by inequality \eqref{eq:thm_3_9} in which we choose $\sigma=1$ and $\sigma^{\prime}=2$, it follows that
            \begin{multline}\label{eq:ge_annulus_4}
                \int_{0}^{t}\int_{A_2}|\nabla u|^{p-1}\xi^{p-1}|x|^{\frac{1}{p-1}}dxd\tau
                \\
                \leq
                C t R^{-\frac{p(N)}{p-1}}M_q(2)^{p-1}
                +
                C t^{\frac{1}{p}}R^{-\frac{p(N)}{p(p-1)}}M_q(2)^{\frac{2(p-1)}{p}}.
            \end{multline} 
            Applying the same Young's inequality as in the previous step to the first term on the right hand side, yields
            \begin{align*}
                tR^{-\frac{p(N)}{p-1}}M_q(2)^{p-1}
                &=
                \left(t^{\frac{1}{2}}R^{-\frac{p(N)}{2(p-1)}}\right)
                \left(t^{\frac{1}{2}} R^{-\frac{p(N)}{2(p-1)}}M_q(2)^{p-1}\right)
                \\
                &\leq
                t^{\frac{1}{2-p}}R^{-\frac{p(N)}{(p-1)(2-p)}}
                +
                t^{\frac{1}{p}} R^{-\frac{p(N)}{p(p-1)}}M_q(2)^{\frac{2(p-1)}{p}},
            \end{align*}
            which plugged into \eqref{eq:ge_annulus_4}, implies
            \begin{multline}\label{eq:ge_annulus_5}
                \int_{0}^{t}\int_{A_2}|\nabla u|^{p-1}\xi^{p-1}|x|^{\frac{1}{p-1}}dxd\tau
                \leq
                Ct^{\frac{1}{2-p}}R^{-\frac{p(N)}{(p-1)(2-p)}}
                +
                C t^{\frac{1}{p}}R^{-\frac{p(N)}{p(p-1)}}M_q(2)^{\frac{2(p-1)}{p}}.
            \end{multline}
    \end{enumerate}
    Hence, from \eqref{eq:ge_annulus_3} and \eqref{eq:ge_annulus_5}, we obtain that for any $p\in (p_c,2)$, it holds that
    \begin{multline}
        \int_{0}^{t}\int_{A_2}|\nabla u|^{p-1}\xi^{p-1}|x|^{\frac{1}{p-1}}dxd\tau
        \leq
        C t^{\frac{1}{2-p}}\left(r^{-\frac{p(N)}{(p-1)(2-p)}} + R^{-\frac{p(N)}{(p-1)(2-p)}}\right)\\
        +
        C t^{\frac{1}{p}}\left(r^{-\frac{p(N)}{p(p-1)}} + R^{-\frac{p(N)}{p(p-1)}}\right) M_q(2)^{\frac{2(p-1)}{p}}.
    \end{multline}
    Let $\alpha_1=-p(N)/[(p-1)(2-p)]$ and $\alpha_2=-p(N)/[p(p-1)]$ and $\alpha_3=-2p(N)/[p(2-p)]$ and denote by $F_i$ the terms $r^{\alpha_i} + R^{\alpha_i}$ for $i=1,2,3$.
    By applying inequality \eqref{eq:thm_result_ineq} in Theorem \ref{theorem:moment_estimate} to $M_q(2)$ and inequality $(a+b)^{\gamma}\leq a^{\gamma} + b^{\gamma}$ for $\gamma=2(p-1)/p$, it follows that
    \begin{multline}\label{eq:ge_annulus_6}
        \int_{0}^{t}\int_{A_2}|\nabla u|^{p-1}\xi^{p-1}|x|^{\frac{1}{p-1}}dxd\tau
        \leq
        C t^{\frac{1}{2-p}}F_1 + C t^{\frac{1}{p}} F_2 \left( \int_{A_4}|x|^q u_0 dx + t^{\frac{1}{2-p}}F_1 \right)^{\frac{2(p-1)}{p}}\\
        \leq
        C t^{\frac{1}{2-p}}\left( F_1 + F_2 \left(F_1\right)^{\frac{2(p-1)}{p}}\right)
        + C t^{\frac{1}{p}}F_2 \left(\int_{A_4}|x|^q u_0dx\right)^{\frac{2(p-1)}{p}}.
    \end{multline}
    By using inequality $(a+b)^{\gamma}\leq a^{\gamma}+b^{\gamma}$ for $\gamma=(2-p)/p$ and $\gamma=2(p-1)/p$ respectively, we have $(F_1)^{(2-p)/p} \leq F_2$ and $(F_1)^{2(p-1)/p}\leq F_3$.
    Then for the first two terms on the right hand side in \eqref{eq:ge_annulus_6}, it follows that
    \begin{equation*}
        F_1 + F_2 \left(F_1\right)^{\frac{2(p-1)}{p}}
        =
        \left(F_1\right)^{\frac{2-p}{p} + \frac{2(p-1)}{p}} + F_2 \left(F_1\right)^{\frac{2(p-1)}{p}}
        \leq
        2 F_2 F_3.
    \end{equation*}
    Plugging it into \eqref{eq:ge_annulus_6}, we obtain
    \begin{multline*}
        \int_{0}^{t}\int_{A_2}|\nabla u|^{p-1}\xi^{p-1}|x|^{\frac{1}{p-1}}dxd\tau
        \leq
        C t^{\frac{1}{2-p}}F_2F_3 + C t^{\frac{1}{p}}F_2 \left(\int_{A_4}|x|^q u_0 dx\right)^{\frac{2(p-1)}{p}}.
    \end{multline*}
    which together with \eqref{eq:ge_annulus_1} yields the result.
\end{proof}

\begin{proof}[Proof of Theorem \ref{thm:mass_conserv}]
    Let $\mu\in \mathcal{M}^+$ with finite total mass be fixed and $(u_{0n})$ be the sequence of $C^{\infty}_c$ functions constructed in the proof of Theorem \ref{theorem:moment_estimate}, and $u_n$ be the positive weak solution of \eqref{eq:p_laplace} with $u_{0n}$ as initial data, and $u$ be the positive weak solution constructed in the proof of Theorem \ref{theorem:moment_estimate}.
    Let $\rho>0$ and $\xi$ be a smooth cut-off function such that $\xi=1$ on $B_{\rho}$ and $\xi=0$ on $B^c_{2\rho}$ and $|\nabla \xi|\leq \rho^{-1}$ on $A_{\rho}^{2\rho}$.
    By using $\phi(x,\tau)=\xi(x)\chi_{[0,T]}(\tau)$ as the test function in \eqref{eq:test_condtion_2}, it follows that for any $0<t\leq T$ and $n\geq 1$,
    \begin{multline}\label{eq:mass_conserv_1}
        \left|
        \int_{B_{2\rho}}\xi(x) u_n(x,t) dx
        -
        \int_{B_{2\rho}}\xi(x)u_{0n}(x)dx
        \right|
        \leq
        \int_{0}^{t}\int_{B_{2\rho}}|\nabla u_n|^{p-1} |\nabla \xi| dxd\tau\\
        \leq
        \rho^{-1} \int_{0}^{t}\int_{\rho\leq |x|\leq 2\rho}|\nabla u_n|^{p-1}dxd\tau.
    \end{multline}
    By Lemma \ref{lemma:ge_annulus} with $r=\rho$ and $R=2\rho$, it follows that
    \begin{multline}\label{eq:mass_conserv_2}
        \left|
        \int_{B_{2\rho}}\xi(x) u_n(x,t) dx
        -
        \int_{B_{2\rho}}\xi(x)u_{0n}(x)dx
        \right| \leq
        \\
        C t^{\frac{1}{p}}\rho^{-1-\frac{1}{p-1}-\frac{p(N)}{p(p-1)}}
        \left\{
            \int_{\frac{1}{4}\rho\leq |x|\leq 8\rho}|x|^q u_{0n}dx
        \right\}^{\frac{2(p-1)}{p}}
        +
        C t^{\frac{1}{2-p}}\rho^{-1-\frac{1}{p-1}-\frac{2p(N)}{p(2-p)}-\frac{p(N)}{p(p-1)}}.
    \end{multline}
    By \eqref{eq:thm_approximation} in the proof of Theorem \ref{theorem:moment_estimate}, it holds that $u_n$ converges to $u$, up to a subsequence, uniformly on each compact subset of $S_T$.
    Together with equality \eqref{eq:thm_1_initial_data_cond_2}, taking $n\rightarrow \infty$ on both side of \eqref{eq:mass_conserv_2} and it follows that
    \begin{multline}\label{eq:mass_conserv_3}
        \left|
        \int_{B_{2\rho}}\xi(x) u(x,t) dx
        -
        \int_{B_{2\rho}}\xi(x)d\mu
        \right|
        \leq
        C t^{\frac{1}{p}}\rho^{-1-\frac{1}{p-1}-\frac{p(N)}{p(p-1)}}
        \left\{
            \int_{\frac{1}{4}\rho\leq |x|\leq 8\rho}|x|^q d\mu
        \right\}^{\frac{2(p-1)}{p}}\\
        +
        C t^{\frac{1}{2-p}}\rho^{-1-\frac{1}{p-1}-\frac{2p(N)}{p(2-p)}-\frac{p(N)}{p(p-1)}}.
    \end{multline}
    For the left hand side of \eqref{eq:mass_conserv_3}, by letting $r\rightarrow \infty$ on both side of \eqref{eq:thm_proof_4} in the proof of Theorem \ref{theorem:moment_estimate}, it follows that $\int_{\mathbb{R}^N}u(x,t)dx<\infty$.
    Together with $|\xi(x)u(x,t)|\leq u(x,t)$, by dominated convergence theorem, it follows that
    \begin{equation}
        \lim_{\rho\rightarrow \infty}
        \left|
        \int_{B_{2\rho}}\xi(x)u(x,t)dx - \int_{B_{2\rho}}\xi(x)d\mu
        \right|
        =
        \left|
        \int_{\mathbb{R}^N}u(x,t)dx - \mu(\mathbb{R}^N)
        \right|.
    \end{equation}  
    For the right hand side of \eqref{eq:mass_conserv_3}, it is easy to check that for $p\in (p_c,2)$,
    \begin{align*}
        -1-\frac{1}{p-1}-\frac{p(N)}{p(p-1)}
        =
        -\frac{(N+3)p - 2N}{p}<0,\\
        -1-\frac{1}{p-1}-\frac{2p(N)}{p(2-p)}-\frac{p(N)}{p(p-1)}
        =
        -\frac{(N+1)p - 2N}{2-p}<0.
    \end{align*}
    Since $\mu$ has finite $q$-moment, the term on the right hand side of \eqref{eq:mass_conserv_3} converges to $0$ as $\rho\rightarrow \infty$.
    So we obtain that for any $0<t\leq T$,
    \begin{equation*}
        \int_{\mathbb{R}^N}u(x,t)dx 
        =
        \mu(\mathbb{R}^N).
    \end{equation*}
\end{proof}

\begin{proof}[Proof of Corollary \ref{corollary:weak_convergence}]
    By Theorem \ref{theorem:moment_estimate} and definition of weak solution of \eqref{eq:p_laplace} and \eqref{eq:initial_data_2}, it follows that measure $\mu_t=u(x,t)dx$ converges vaguely  to initial data $\mu$.
    Since $\mu_t(\mathbb{R}^N)=\mu(\mathbb{R}^N)$ for all $0<t< T$, by the classical result(for example, \citep[Theorem 13.16]{Achim_Prob}, it follows that $\mu_t$ converges to $\mu$ weakly as $t$ goes to $0$.
\end{proof}

\section{Convergence rate of Wasserstein distance}
We address now the convergence rate of the constructed weak solution $u(t,\cdot)dx$ to $\mu$ in the $q$-Wasserstein distance.
We recall that the $q$-Wasserstein distance $W_q(\mu,\nu)$ between finite Borel measures $\mu,\nu$ in $\mathbb{R}^N$ with equal mass is defined as
\begin{equation}
    W^q_q(\mu, \nu)
    =
    \min\left\{
        \int_{\mathbb{R}^N \times \mathbb{R}^N} |x-y|^q d\pi(x,y) \colon \pi\in \Pi(\mu,\nu)
    \right\},
\end{equation}
where $\Pi(\mu,\nu)$ is the family of all Borel measures on $\mathbb{R}^N \times \mathbb{R}^N$ having $\mu$ and $\nu$ as their first and second marginal measures respectively.
\begin{theorem}\label{thm:Wasserstein}
    Let $\mu$ be a finite Radon measure on $\mathbb{R}^N$ and $u$ be the weak solution of \eqref{eq:p_laplace} constructed in Theorem \ref{theorem:moment_estimate} with $\mu$ as initial data.
    If $p\in (p_N,2)$ and $\mu$ has finite $q$-moment, then there exist constant $C:=C(N,p,\mu,T)$ such that for all $t\in [0,T]$,
    \begin{equation}\label{eq:Wasserstein_result_1}
        W^q_q(\mu_t, \mu)
        \leq
        C t^{q-1}.
    \end{equation}
\end{theorem}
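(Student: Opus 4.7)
The strategy is to view the $p$-Laplace evolution as the continuity equation $\partial_\tau \mu_\tau + \operatorname{div}(v_\tau \mu_\tau) = 0$ driven by the velocity field $v_\tau := -|\nabla u|^{p-2}\nabla u / u$, and to combine the Benamou--Brenier-type upper bound
\begin{equation*}
W_q^q(\mu_s,\mu_t)\leq (t-s)^{q-1}\int_s^t\!\int_{\mathbb{R}^N}|v_\tau|^q u(x,\tau)\,dx\,d\tau
\end{equation*}
with a R\'enyi entropy dissipation estimate. Using the conjugacy $q(p-1) = p$, the integrand simplifies to $|v_\tau|^q u = |\nabla u|^p u^{1-q}$, so with $s = 0$ the whole task reduces to showing that
\begin{equation*}
\int_0^t\!\int_{\mathbb{R}^N} |\nabla u|^p u^{1-q}\,dx\,d\tau \leq C(N,p,\mu,T).
\end{equation*}
This mirrors the mechanism used by \citet{ambrosio2019pde} for the heat flow (the case $p = q = 2$), and the exponent $t^{q-1}$ comes directly from the H\"older exponent in the Benamou--Brenier bound.

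The key identity is obtained by choosing the entropy exponent $\alpha := 3-q$, which lies in $(0,1)$ because $p_N > 3/2$: a direct evaluation of $p(N)$ at $p=3/2$ gives $-N/4 < 0$, while $p(2) = 2 > 0$, so the largest root of $p(N)$ is strictly above $3/2$, hence $q \in (2,3)$. Testing the equation against $u^{\alpha - 1}$ and integrating by parts formally yields
\begin{equation*}
\frac{d}{d\tau}\int u^{\alpha}\,dx = \alpha(1-\alpha)\int u^{\alpha - 2}|\nabla u|^p\,dx = (3-q)(q-2)\int u^{1-q}|\nabla u|^p\,dx \geq 0.
\end{equation*}
Integrating in $\tau$ and dropping the nonnegative initial term gives the bound $\int_0^t\!\int u^{1-q}|\nabla u|^p\,dx\,d\tau \leq \frac{1}{(3-q)(q-2)} \int u(x,t)^{3-q}\,dx$. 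A H\"older inequality with exponent $1/\alpha$ then controls the right-hand side by
\begin{equation*}
\left(\int (1+|x|^q) u(x,t)\,dx\right)^{\!3-q}\!\left(\int (1+|x|^q)^{-(3-q)/(q-2)}\,dx\right)^{\!q-2}.
\end{equation*}
The first factor is uniformly bounded on $[0,T]$ by Theorem \ref{theorem:moment_estimate} together with mass conservation (Theorem \ref{thm:mass_conserv}). The second factor is purely geometric and finite precisely when $q(3-q)/(q-2) > N$; substituting $p = q/(q-1)$ shows that this condition is equivalent to $p(N) > 0$, i.e.\ to $p \in (p_N, 2)$. The standing hypothesis is therefore exactly the sharp one for this step.

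Combining these pieces yields the desired bound $W_q^q(u_{0n}\,dx, u_n(\cdot,t)\,dx) \leq C\, t^{q-1}$ for the smooth approximations $u_n$ of Theorem \ref{theorem:moment_estimate}, with constant uniform in $n$. The principal technical obstacle is the rigorous justification at this regularity level and the passage to the limit. The Benamou--Brenier inequality and the entropy identity are only formally valid for $u_n$: one has to control the possible singularity of $u_n^{\alpha-2}|\nabla u_n|^p$ as $\tau \searrow 0$ and the lack of integrability at infinity, which requires a double time/space cut-off and an integration by parts argument in the spirit of Lemma \ref{lemma:aux_lemma_1}, together with passing $\varepsilon \searrow 0$ in an $(u_n + \varepsilon)^{\alpha}$-type regularization. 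Once the uniform-in-$n$ bound is established, one concludes by observing that $u_n(\cdot,t)\,dx$ converges to $u(\cdot,t)\,dx$ weakly with matching total mass (Theorem \ref{thm:mass_conserv}) and with uniformly controlled $q$-moments (Theorem \ref{theorem:moment_estimate}); these two facts upgrade the weak convergence to convergence in $W_q$, and the lower semicontinuity of $W_q^q$ yields \eqref{eq:Wasserstein_result_1}.
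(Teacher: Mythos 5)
Your proposal is correct and follows essentially the same route as the paper: the Benamou--Brenier bound with velocity $v=-|\nabla u|^{p-2}\nabla u/u$ reduces the problem to $\int_0^t\int|\nabla u|^p u^{1-q}$, which the paper's Lemma~\ref{lemma:Wasserstein} controls by the exact same R\'enyi-type entropy computation (test function $u_\varepsilon^{1-1/(p-1)}\xi$, i.e.\ your $u^{\alpha-1}$ with $\alpha=3-q$), the same H\"older split of $\int u^{3-q}$ against mass and $q$-moment, and the same identification of the finiteness condition with $p(N)>0$. The only cosmetic difference is in the final approximation step, where you invoke lower semicontinuity of $W_q^q$ while the paper uses a triangle inequality after showing $W_q$ convergence of both endpoint sequences; both are valid and rely on the same ingredients (weak convergence plus uniform $q$-moment control and mass conservation).
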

To address the proof of Theorem \ref{thm:Wasserstein}, we first establish an auxiliary lemma, prove the theorem in the case where the initial data is in $C^{\infty}_c(\mathbb{R}^N$), and then show the general case by an approximation procedure.
The key ingredients of the proof are to use Brenier-Benamou formulation for $q$-Wasserstein distance and write the parabolic $p$-Laplace equation as a law of mass conservation $\partial_t u = \mathrm{div}(\mathbf{v} u)$ where $\mathbf{v}=|\nabla u|^{p-2}(-\nabla u)/u$.

\begin{lemma}\label{lemma:Wasserstein}
    Let $p\in (p_N,2)$ and $u$ be the positive weak solution of Cauchy problem \eqref{eq:p_laplace} with $u_0\in C_{c}^{\infty}(\mathbb{R}^N)$ and $u_0\geq 0$ as initial data.
    Then for any $t>0$, it holds that
    \begin{multline}
        \int_{0}^{t}\int_{\mathbb{R}^N}|\nabla u|^p u^{-\frac{1}{p-1}} dx d\tau
        \leq
        C\left( \int_{|x|\leq 2}u_0 dx\right)^{2-\frac{1}{p-1}}
        +
        C\left( \int_{|x|\geq 1/2}|x|^q u_0 dx\right)^{2-\frac{1}{p-1}}\\
        +
        C t^{\frac{2p-3}{(p-1)(2-p)}}
    \end{multline}
    for some $C:=C(N,p)$.
\end{lemma}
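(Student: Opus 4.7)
The plan is to exploit a formal energy identity obtained by testing the $p$-Laplace equation against $u^{(p-2)/(p-1)}$. Formally, if one multiplies \eqref{eq:p_laplace} by this quantity and integrates by parts over $\mathbb{R}^N \times (0,t)$ without boundary contributions, one obtains
\begin{equation*}
    \frac{d}{d\tau}\int_{\mathbb{R}^N} u^{\frac{2p-3}{p-1}}dx = c_p \int_{\mathbb{R}^N} |\nabla u|^p u^{-\frac{1}{p-1}} dx,
\end{equation*}
where $c_p = (2p-3)(2-p)/(p-1)^2 > 0$ for $p\in (p_N,2)\subset (3/2,2)$. Integrating in $\tau$ and discarding the (non-negative) initial term would then yield the bound $\int_0^t\int |\nabla u|^p u^{-1/(p-1)}\,dxd\tau\leq c_p^{-1}\int u(\cdot,t)^{(2p-3)/(p-1)}dx$. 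So the strategy is: (i) make this identity rigorous, and (ii) bound the right-hand side by the three terms in the statement.

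To rigorize the identity, I would work with $u_\varepsilon := u+\varepsilon$ and a radial cutoff $\xi_R\in C_c^\infty(\mathbb{R}^N)$ equal to $1$ on $B_R$, supported in $B_{2R}$, with $|\nabla\xi_R|\leq C/R$, and use $\psi = u_\varepsilon^{(p-2)/(p-1)}\xi_R^p$ as test function in \eqref{eq:test_condtion_2}. The $\partial_\tau u$ part gives a boundary contribution $\frac{p-1}{2p-3}\int\xi_R^p[u_\varepsilon(\cdot,t)^{(2p-3)/(p-1)}-u_\varepsilon(\cdot,0)^{(2p-3)/(p-1)}]dx$, while the gradient term produces the desired main integral $\frac{2-p}{p-1}\int_0^t\int |\nabla u|^p u_\varepsilon^{-1/(p-1)}\xi_R^p dxd\tau$ plus a cross term with $\nabla\xi_R$. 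Young's inequality on the cross term absorbs a fraction into the main term, leaving an error of the form $C R^{-p}\int_0^t\int_{\{R\leq |x|\leq 2R\}} u_\varepsilon^{(p^2-p-1)/(p-1)}dxd\tau$. Using the pointwise bound \eqref{eq:Junning_2} of Lemma \ref{lemma:Junning_Lemma} one controls $u^{(p^2-p-1)/(p-1)}$ on the far annulus, then combining with the $L^1$ tail bound (which follows from \eqref{eq:Junning_1} and the finite $q$-moment estimate of Theorem \ref{theorem:moment_estimate}) one sends $R\to\infty$; dominated convergence then disposes of $\varepsilon\searrow 0$.

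Next, I would bound $\int u(\cdot,t)^{\gamma}dx$ with $\gamma=(2p-3)/(p-1)\in(0,1)$ by Hölder interpolation. Splitting into $\{|x|\leq 1\}$ and $\{|x|\geq 1\}$:
\begin{equation*}
    \int_{|x|\leq 1} u^{\gamma}dx \leq |B_1|^{1-\gamma}\Bigl(\int_{|x|\leq 1} u\,dx\Bigr)^{\gamma}, \quad
    \int_{|x|\geq 1} u^{\gamma}dx \leq \Bigl(\int_{|x|\geq 1}|x|^q u\,dx\Bigr)^{\gamma}\Bigl(\int_{|x|\geq 1}|x|^{-q\gamma/(1-\gamma)}dx\Bigr)^{1-\gamma}.
\end{equation*}
The last integral is finite iff $q\gamma/(1-\gamma)>N$, which simplifies (after multiplying out) precisely to $p(N)>0$; this is exactly where the assumption $p>p_N$ is used. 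Apply Lemma \ref{lemma:Junning_Lemma} inequality \eqref{eq:Junning_1} with $R=1$ to bound $\sup_\tau \int_{B_1}u\,dx \leq C\int_{|x|\leq 2}u_0\,dx + C t^{1/(2-p)}$, and Theorem \ref{theorem:moment_estimate} with $r=1$, $R\to\infty$ (valid under $p(N)>0$) to bound $\sup_\tau\int_{|x|\geq 1}|x|^q u\,dx\leq C\int_{|x|\geq 1/2}|x|^q u_0\,dx + C t^{1/(2-p)}$. Subadditivity of $s\mapsto s^\gamma$ on $[0,\infty)$ separates out the $u_0$ contributions and the $t$ contribution, and the $t$ exponent becomes $\gamma/(2-p)=(2p-3)/[(p-1)(2-p)]$, matching the statement.

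The main obstacle I expect is Step 2, namely showing the cut-off error from $|\nabla\xi_R|^p$ genuinely vanishes as $R\to\infty$: the exponent $(p^2-p-1)/(p-1)$ on $u_\varepsilon$ in that error is not obviously controlled by any single $L^r$ norm, so it requires combining the pointwise supremum estimate \eqref{eq:Junning_2} with the local $L^1$ integrability and the $q$-moment tail estimate to get uniform control. Once this limiting passage is carried out cleanly, the remaining arithmetic (verifying signs, checking that $\gamma\in(0,1)$, tracking constants) is routine.
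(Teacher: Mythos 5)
Your proposal follows essentially the same route as the paper's proof: test with $\psi = u_\varepsilon^{1-1/(p-1)}\xi$, integrate by parts (Young's inequality absorbs the cross term), discard the negative time-boundary contribution, and then bound the remaining $\int u_\varepsilon^{2-1/(p-1)}\,dx$ by splitting at $|x|=1$ and a weighted H\"older whose finiteness condition is exactly $p(N)>0$; the $t$-exponent and the two $u_0$-terms come out precisely as you describe. The one spot you flag as a possible obstacle — the cutoff error $R^{-p}\int_0^t\int_{A_R^{2R}} u_\varepsilon^{\,p-1/(p-1)}\,dx\,d\tau$ — is actually handled more simply than you anticipate: you do not need the pointwise bound \eqref{eq:Junning_2}, only the same $|x|^q$-weighted H\"older used for the other term (inserting $|x|^{-q(p-1/(p-1))}\cdot|x|^{q(p-1/(p-1))}$ and using that $p-1/(p-1)\in(0,1)$ on $(p_N,2)$, \emph{not} $>1$ as the paper's text misstates), which produces a factor $R^{-q\,p(N)/(p-1)}\,t\,M_q^{\,p-1/(p-1)}$ vanishing as $R\to\infty$ precisely because $p(N)>0$.
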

\begin{proof}
    Let $\varepsilon>0$ and $R>0$.
    Let $\xi$ be a smooth cut-off function such that $\xi=1$ on  $B_{R}$ and $\xi=0$ on $B^c_{2R}$ and $|\nabla \xi|\leq R^{-1}$ on $A_R^{2R}$.
    Let $u_{\varepsilon}:=u+\varepsilon$ and $u_{0\varepsilon}:=u_0+\varepsilon$ and $\psi:=u_{\varepsilon}^{1-1/(p-1)}\xi$.
    Multiplying $\psi$ on \eqref{eq:p_laplace} and integrating it over $B_{2R}\times [0,t]$, by integral by part and similar argument in Lemma \ref{lemma:aux_lemma_1}, it follows that
    \begin{align}\label{eq:ge_v5_1}
        \int_{0}^{t}\int_{B_{2R}}|\nabla u|^{p} u_{\varepsilon}^{-\frac{1}{p-1}}\xi dxd\tau
        &\leq
        C \int_{0}^{t}\int_{B_{2R}}u_{\varepsilon}^{p-\frac{1}{p-1}}|\nabla \xi|^p dxd\tau \notag\\
        &\quad \quad +
        C \left(\int_{B_{2R}}u_{\varepsilon}^{2-\frac{1}{p-1}} \xi dx
            -
        \int_{B_{2R}}u_{0\varepsilon}^{2-\frac{1}{p-1}} \xi dx \right) \notag\\
        &\leq
        C \int_{0}^{t}\int_{B_{2R}}u_{\varepsilon}^{p-\frac{1}{p-1}}|\nabla \xi|^p dxd\tau
        +
        C \int_{B_{2R}}u_{\varepsilon}^{2-\frac{1}{p-1}} dx
    \end{align}
    Note that $p\in (p_N,2)$ implies that $p-1/(p-1)>1$.
    Hence, for the first term on the right hand side of \eqref{eq:ge_v5_1}, by using the similar argument as in the proof of Theorem \ref{theorem:moment_estimate}, it follows that
    \begin{align}\label{eq:ge_v5_2}
        \int_{0}^{t}\int_{B_{2R}}u_{\varepsilon}^{p-\frac{1}{p-1}}|\nabla \xi|^p dx d\tau
        &\leq
        R^{-p}\int_{0}^{t}\int_{A_{R}^{2R}} |x|^{-q\left(p-\frac{1}{p-1}\right)}|x|^{q\left(p-\frac{1}{p-1}\right)}u_{\varepsilon}^{p-\frac{1}{p-1}}dx d\tau \notag\\
        &\leq
        C t R^{q\left( 1-k-p+\frac{1}{p-1}\right)}
        \left(
            \sup_{0<\tau\leq t}\int_{A_R^{2R}}|x|^q u_{\varepsilon} dx
        \right)^{p-\frac{1}{p-1}}.
    \end{align} 
    As for the second term on the right hand side of \eqref{eq:ge_v5_1}, we take $\phi_1(x)=1_{\{|x|<1\}} + |x|^{q(2-1/(p-1))}1_{A_1^{2R}}$ and $\phi_2(x)=1_{\{|x|<1\}} + |x|^{-q(2-1/(p-1))}1_{A_1^{2R}}$.
    By H\"older's inequality and $(a+b)^{\gamma}\leq a^{\gamma} + b^{\gamma}$ for $\gamma=2-1/(p-1)$, it follows that
    \begin{align}\label{eq:ge_v5_4}
        \int_{B_{2R}}u_{\varepsilon}^{2-\frac{1}{p-1}}dx
        &\leq
        \left(
            \int_{B_{2R}} \phi_2^{\frac{p-1}{2-p}} dx
        \right)^{\frac{1}{p-1}-1}
        \left(
            \int_{B_{2R}} \phi_1^{\frac{p-1}{2p-3}} u_{\varepsilon} dx
        \right)^{2-\frac{1}{p-1}} \notag\\
        &\leq
        C \left( \int_{B_1} u_{\varepsilon}dx\right)^{2-\frac{1}{p-1}}
        +
        C \left( \int_{A_1^{2R}}|x|^q u_{\varepsilon} dx \right)^{2-\frac{1}{p-1}}.
    \end{align}
    Plugging \eqref{eq:ge_v5_2} and \eqref{eq:ge_v5_4} into \eqref{eq:ge_v5_1}, taking $\varepsilon \rightarrow 0$ and using monotone convergence theorem as well as identity $1-k-p+1/(p-1)=-p(N)/(p-1)$ yields
    \begin{align}\label{eq:ge_v5_5}
        \int_{0}^{t}\int_{B_{2R}}|\nabla u|^p u^{-\frac{1}{p-1}}\xi dxd\tau
        &\leq
        C t R^{-\frac{p(N)q}{p-1}}
        \left( M_q(t,A_r^{2R})\right)^{p-\frac{1}{p-1}}
        +
        C \left( M_0(t,B_1)\right)^{2-\frac{1}{p-1}} \notag\\
        &+
        C \left( M_q(t,A_1^{2R}) \right)^{2-\frac{1}{p-1}},
    \end{align}
    where $M_q(t,A):=\sup_{0<\tau\leq t}\int_{A}|x|^q u(x,\tau) dx$.
    Using Inequality \eqref{eq:thm_result_ineq} from Theorem \ref{theorem:moment_estimate} and \eqref{eq:Junning_1} from \ref{lemma:Junning_Lemma} in the right hand side of \eqref{eq:ge_v5_5}, it follows that
    \begin{multline}\label{eq:ge_v5_6}
        \int_{0}^{t}\int_{B_{R}}|\nabla u|^p u^{-\frac{1}{p-1}} dx d\tau
        \leq
        \int_{0}^{t}\int_{B_{2R}}|\nabla u|^p u^{-\frac{1}{p-1}} \xi dx d\tau\\
        \leq
        C \left( M_0(0,B_2)\right)^{2-\frac{1}{p-1}}
        +
        C t R^{-\frac{p(N)q}{p-1}} \left(M_q(0,A_{R/2}^{4R}) \right)^{p-\frac{1}{p-1}}\\
        +
        C \left( M_q(0,A_{1/2}^{4R}) dx\right)^{2-\frac{1}{p-1}}
        +
        C t^{\frac{2p-3}{(p-1)(2-p)}}\left(
            1 + R^{-\frac{p(N)}{(p-1)(2-p)}}
        \right),
    \end{multline}
    where $M_q(0,A):=\int_{A}|x|^q u_0 dx$.
    Since $p(N)>0$, taking $R\rightarrow \infty$ on both side of \eqref{eq:ge_v5_6}, monotone convergence theorem and $q$-finite moment of $u_0$ yields the result.
\end{proof}

\begin{proof}[Proof of Theorem \ref{thm:Wasserstein}]
    We first show that inequality \eqref{eq:Wasserstein_result_1} holds in the case where $u(x,\tau)$ is the positive weak solution of \eqref{eq:p_laplace} with $u_0$ in $C^{\infty}_c(\mathbb{R}^N)$ and $u_0\geq 0$ as initial data.
    Let $\mu=u_0dx$ and $\mu_t=u(x,t)dx$.
    By Benamou-Brenier formulation for $q$-Wasserstein, see \citep[Theorem 5.28]{Santambrogio_OT} and \citep{BB2000}, it follows that
    \begin{equation*}\label{eq:Wasserstein_1}
        W^q_q(\mu, \mu_t)
        =
        \inf\left\{
            \int_{0}^{1}\int_{\mathbb{R}^N} |v_{\tau}|^q \rho_{\tau} dx d\tau \colon \partial_{\tau}\rho_{\tau} + \mathrm{div}(\rho_{\tau} v_{\tau})=0, \rho_0=u_0, \rho_{1}=u(t)
        \right\}.
    \end{equation*}
    Rescaling as $\tilde{\rho}(x,\tau)=\rho(x,t^{-1}\tau)$ and $\tilde{v}(x,\tau)=t^{-1}v(x,t^{-1}\tau)$ and changing variable yields
    \begin{multline*}
        W^q_q(\mu,\mu_t)
        =\\
        \inf\left\{
            t^{q-1}\int_{0}^{t}\int_{\mathbb{R}^N} |\tilde{v}_{\tau}|^q \tilde{\rho}_{\tau} dx d\tau \colon \partial_{\tau}\tilde{\rho}_{\tau} + \mathrm{div}(\tilde{\rho}_{\tau}\tilde{v}_{\tau})=0, \tilde{\rho}_{0}=u_0, \tilde{\rho}_{t}=u(t)
        \right\}.
    \end{multline*}
    Note that by the property of singular $p$-Laplace equation, $u(x,\tau)>0$ for all $\tau\in (0,t]$. 
    So choosing $\tilde{v}:=-|\nabla u|^{p-2}\frac{\nabla u}{u}$ and $\tilde{\rho}=u$ for $\tau\in (0,t]$, it follows that
    \begin{equation}\label{eq:Wasserstein_2}
        W^q_q(\mu,\mu_t)
        \leq
        t^{q-1}\int_{0}^{t}\int_{\mathbb{R}^N}|\nabla u|^p u^{-\frac{1}{p-1}} dx d\tau.
    \end{equation}
    By Lemma \ref{lemma:Wasserstein}, the inequality \eqref{eq:Wasserstein_result_1} follows.

    We are left to show that inequality \eqref{eq:Wasserstein_result_1} holds in the case where $u$ is the positive weak solution of the Cauchy problem \eqref{eq:p_laplace} with finite Radon measure with finite $q$-moment $\mu$ as initial data, constructed in the proof of Theorem \ref{theorem:moment_estimate}.
    Let $(u_{0n})=((m_n*\mu)\xi_n)$ be the sequence of smooth initial data in $C^{\infty}_{c}(\mathbb{R}^N)$ defined in the proof of Theorem \ref{theorem:moment_estimate} and $u_n$ be the corresponding positive weak solution, and $(u_{n_k})_k,(u_{0n_k})$ be the subsequence of $(u_n),(u_{0n})$ such that $u_{nk}$ and $\nabla u_{nk}$ converges to $u$ and $\nabla u$ uniformly on all compact subset of $S_T$.
    Let $\varepsilon>0$ be fixed.
    For $W_q(\mu,\mu_t)$, by triangle inequality, it follows that
    \begin{multline}\label{eq:Wasserstein_3}
        2^{1-q} W^q_q(\mu,\mu_t)
        \leq
        W^q_q(\mu, \mu^{0n_k}) + W^q_q(\mu^{0n_k}, \mu^{n_k}_t )
        + W^q_q(\mu^{n_k}_tdx, \mu_t),
    \end{multline}
    where $d\mu^{0n}=u_{0n}dx$ and $d\mu^{n}_t=u_{n}(x,t)dx$.

    As for the first term on the right hand side of \eqref{eq:Wasserstein_3}, we claim that $\mu^{0n}$ converges to $\mu$ weakly and that $\sup_{n}\int_{|x|\geq R}|x|^p u_{0n}dx \rightarrow 0$ as $R\rightarrow \infty$.
    Indeed, for all $n\in \mathbb{N}$ and $R>0$, by construction of $u_n$ and Fubini's theorem, it follows that
    \begin{align}\label{eq:Wasserstein_4}
        \int_{|x|\geq R}|x|^q u_{0n}dx
        &=
        \int_{|x|\geq R} |x|^q \xi_n(x)\int_{y\in B_{1/n}(x)}m_n(x-y)d\mu(y)dx \notag\\
        &=
        \int_{|y|\geq (R-1)^+}\int_{x\in B_{1/n}(y)}m_n(x-y)\xi_n(x)|x-y+y|^q dxd\mu(y)\notag\\
        &\leq
        2^{q-1}\int_{|y|\geq (R-1)^+} |y|^q d\mu(y) +  2^{q-1} n^{-q}\int_{|y|\geq (R-1)^+}d\mu(y).
    \end{align}
    Hence, $\sup_{n}\int_{|x|\geq R}|x|^q u_{0n}dx\rightarrow 0$ as $R\rightarrow \infty$.
    Choose $R>0$ large enough such that $\sup_n\int_{|x|\geq R}|x|^q u_{0n}dx<\varepsilon$ and $\mu(B^c_R)<\varepsilon$ and let $\xi$ in $C_c(\mathbb{R}^N)$ such that $\xi=1$ on $B_R$ and $\xi=0$ on $B^c_{2R}$.
    Since $\mu_{0n}$ converges to $\mu$ vaguely by \eqref{eq:thm_1_initial_data_cond_2}, choose $n$ large enough such that
    \begin{multline}\label{eq:Wasserstein_5}
        \left|\int_{\mathbb{R}^N}u_{0n}dx - \mu(\mathbb{R}^N) \right|
        \leq
        \left| \int_{\mathbb{R}^N}\xi u_{0n}dx - \int_{\mathbb{R}^N}\xi d\mu \right|
        +
        \int_{|x|\geq R}u_{0n}dx + \mu(B^c_R)
        \leq
        3\varepsilon.
    \end{multline}
    Hence, $\int_{\mathbb{R}^N}u_{0n}dx \rightarrow \mu(\mathbb{R}^N)$ as $n\rightarrow \infty$.
    By classical result \citep[Theorem 13.16]{Achim_Prob}, $u_{0n}dx$ converges to $\mu$ weakly.
    Together with \eqref{eq:Wasserstein_4}, by \citep[Theorem 7.12]{Villani_book_2003}, it follows that $W_q(\mu^{0n}, \mu)\rightarrow 0$ as $n\rightarrow \infty$.

    For the third term on the right hand side of \eqref{eq:Wasserstein_3}, we claim that $\mu^{n_k}_t$ converges to $\mu_t$ weakly and that $\sup_{k}\int_{|x|\geq R}|x|^p u_{n_k}dx\rightarrow 0$ as $R\rightarrow\infty$.
    Indeed, by inequality \eqref{eq:thm_result_ineq} of Theorem \ref{theorem:moment_estimate}, it follows that
    \begin{equation}\label{eq:Wasserstein_6}
        \sup_{k}\int_{|x|\geq R}|x|^q u_{n_k}(x,t)dx
        \leq
        C\sup_{k}\int_{|x|\geq R/2}|x|^q u_{0n_k}dx + C t^{\frac{1}{2-p}}R^{-\frac{p(N)}{(p-1)(2-p)}}.
    \end{equation} 
    Applied to \eqref{eq:Wasserstein_4}, it follows that $\sup_{k}\int_{|x|\geq R}|x|^q u_{n_k}dx \rightarrow 0$ as $R\rightarrow \infty$.
    Furthermore, using inequality \eqref{eq:Wasserstein_6} as well as \eqref{eq:q_moment_uniformly_integrable}, choose $R>0$ large enough such that $\sup_{k}\int_{|x|\geq R}|x|^q u_{n_k}(x,t)dx<\varepsilon$ and $\int_{|x|\geq R}|x|^q u(x,t)dx<\varepsilon$.
    Then since $u_{n_k}$ converges to $u$ uniformly on all compact subsets on $S_T$, choose $k\in \mathbb{N}$ large enough, such that
    \begin{multline*}
        \left| \int_{\mathbb{R}^N}u_{n_k}(x,t)dx - \int_{\mathbb{R}^N}u(x,t)dx \right|
        \leq
        \left| \int_{B_R}u_{n_k}(x,t)dx - \int_{B_R}u(x,t)dx \right|\\
        +
        \int_{|x|\geq R}|x|^q u_{n_k}(x,t)dx + \int_{|x|\geq R}|x|^q u(x,t)dx
        \leq
        3\varepsilon.
    \end{multline*}
    By the same argument as previously, it follows that $W_q(\mu^{n_k}_t, \mu_t)\rightarrow 0$ as $k\rightarrow \infty$.

    As for the second term on the right hand side of \eqref{eq:Wasserstein_3}, by result from the first step for smooth initial data and inequality \eqref{eq:Wasserstein_4}, it follows that for any $t\in [0,T]$ it holds
    \begin{multline}\label{eq:Wasserstein_7}
        W^q_q(\mu^{0n_k}, \mu^{n_k}_t) \\
        \leq
        C(N,p)\left\{
            \left(\int_{B_2}u_{0n_k}dx\right)^{2-\frac{1}{p-1}}
            +
            \left(\int_{|x|\geq 1/2}|x|^q u_{0n_k}dx\right)^{2-\frac{1}{p-1}}
            +
            T^{\frac{2p-3}{(p-1)(2-p)}}
        \right\}t^{q-1}\\
        \leq
        C(N,p)\left\{ \left(\mu(\mathbb{R}^N)\right)^{2-\frac{1}{p-1}} + \left(\int_{\mathbb{R}^N}|x|^q d\mu\right)^{2-\frac{1}{p-1}} + T^{\frac{2p-3}{(p-1)(2-p)}}\right\}t^{q-1}.
    \end{multline}
    Thus, taking $k\rightarrow \infty$ on both side of \eqref{eq:Wasserstein_3} and together with inequality \eqref{eq:Wasserstein_7}, the result follows.
\end{proof}

\bibliographystyle{plainnat}
\bibliography{biblio}
\end{document}